\theoremstyle{definition}
\newtheorem{theorem}{Theorem}
\newtheorem{definition}[theorem]{Definition}
\newtheorem{lemma}[theorem]{Lemma}
\newtheorem{corollary}[theorem]{Corollary}
\newcommand{\itO}{\mathit{\Omega}}
\newcommand{\mO}{\mathcal{O}}
\newcommand{\itS}{\mathit{\Sigma}}
\newcommand{\bbT}{\mathbb{T}}
\newcommand{\bbR}{\mathbb{R}}
\newcommand{\cl}{\mathrm{cl}}
\newcommand{\thz}{{\theta_0}}
\begin{document}
\title{%
Invariant Sets in Quasiperiodically Forced Dynamical Systems\thanks{The work was partially supported by JSPS Postdoctoral Fellowships for Research Abroad, JST-CREST Program No. JP-MJCR15K3, and JSPS-KAKEN No. 15H03964 (Y.S.), and by Office of Naval Research Grant No. N00014-14-1-0633 and the ARO-MURI grant W911NF-14-1-0359 (I.M.)}
}%
\author{%
Yoshihiko Susuki\footnote{Yoshihiko Susuki is with Department of Electrical and Information Systems, Osaka Prefecture University, Japan, \texttt{susuki@eis.osakafu-u.ac.jp}, \texttt{susuki@ieee.org}}
~~~
Igor Mezi\'c\footnote{Igor Mezi\'c is with Department of Mechanical Engineering, University of California, Santa Barbara, United States. \texttt{mezic@engineering.uscb.edu}}
}%
\date{November 2019}

\maketitle

\begin{abstract}
This paper addresses structures of state space in quasiperiodically forced dynamical systems.  
We develop a theory of ergodic partition of state space in a class of measure-preserving and dissipative flows, which is a natural extension of the existing theory for measure-preserving maps.  
The ergodic partition result is based on eigenspace at eigenvalue 0 of the associated Koopman operator, which is realized via time-averages of observables, and provides a constructive way to visualize a low-dimensional slice through a high-dimensional invariant set.  
We apply the result to the systems with a finite number of attractors and show that the time-average of a continuous observable is well-defined and reveals the invariant sets, namely, a finite number of basins of attraction.  
We provide a characterization of invariant sets in the quasiperiodically forced systems.  
A theoretical result on uniform boundedness of the invariant sets is presented. 
The series of theoretical results enables numerical analysis of invariant sets in the quasiperiodically forced systems based on the ergodic partition and time-averages.  
Using this, we analyze a nonlinear model of complex power grids that represents the short-term swing instability, named the coherent swing instability.  
We show that our theoretical results can be used to understand stability regions in such complex systems.  
\end{abstract}
 
%


\section{Introduction}
\label{intro}

Methods of ergodic theory \cite{Arnold:1968,Eckmann_RMP57} have been applied for solving a number of problems in science and technology such as control of mixing of fluid flows \cite{DAlessandro_IEEETAC44}, 
analysis of traffic jams \cite{Gray_JSP105} and of quantum many-body models \cite{Lee_PRL87}, performance evaluation of ecosystem models \cite{Pietsch_TP25}.  
A visualization method for invariant sets of discrete-time dynamical systems (or maps) based on ergodic theory was developed in \cite{Mezic_CHAOS9}.  
This method is based on the \emph{ergodic partition} or ergodic decomposition \cite{Peterson:1983} of state space in a given dynamical system, which is associated with the eigenspace at eigenvalue $1$ of the Koopman operator \cite{Marko_CHAOS22}.  
This method partitions a (compact and metric) state space $X$ using joint level sets of time-averages of a basis of functions defined on $X$.   
This leads to an approximation of invariant sets on which the dynamics of the system are ergodic.  
In an ergodic invariant set, almost all points are accessible in the sense that the initial conditions in this set thoroughly sample the set.  
The method enables the visualization of low-dimensional (in practice usually two-dimensional) slices through high-dimensional invariant structures and offers a computationally tractable method for analyzing global structures of state space in discrete-time nonlinear models.  
The numerical methods associated with the ergodic partition theory have been developed: see \cite{Marko_CHAOS22} and references therein.  

The purpose of this paper is to extend the ergodic partition theory and exploit it to provide global analysis of state space in \emph{quasiperiodically forced} dynamical systems.  
Quasiperiodicity is one of the three types of commonly observed dynamics in both deterministic models and experiments: see, e.g., \cite{Das_EPL114,Glaz_JFM828}.   
Quasiperiodically forced systems are important objects in nonlinear dynamics and have been studied by many groups of researchers:  phenomenology of chaotic attractors \cite{Moon_PL111A}, analysis of dynamics described by the Schr\"odinger equations \cite{Bondeson_PRL55} and of quantum chaos \cite{Pomeau_PRL56}, and dynamical systems analysis with applications to fluid mixing \cite{Wiggins_PLA124,Wiggins:1992}. 
In the present paper, we study the global structure of state space in quasiperiodically forced systems from the viewpoint of ergodic partition.  
First, we develop a theory of ergodic partition of state space in measure-preserving and dissipative, continuous-time dynamical systems or \emph{flows}.   
This is a natural extension of the existing theory for measure-preserving maps in \cite{Mezic_CHAOS9}. 
The ergodic partition is again based on eigenspace at eigenvalue 0 of the associated Koopman group. 
Related analyses of time-periodic flows via spectral properties of linear operators are reported in \cite{Froyland_Nonlinear30,Giannakis_Preprint:2017}. 
Since an arbitrary quasiperiodically forced system with a smooth invariant measure is transformed into an autonomous system determining a measure-preserving flow (see Section~\ref{sec:set-up}), the ergodic partition theory is used for visualization of invariant sets for the quasiperiodically forced system.  
Next, we provide a characterization of invariant sets in quasiperiodically forced systems.  
In general, understanding invariant structures in the quasiperiodically forced systems is not easy because the associated portraits of state space change with time in an aperiodic manner.  
Here, we introduce a notion of \emph{uniform boundedness} of invariant sets.  
A theoretical result on uniform boundedness of invariant sets is presented in this paper: see Theorem~\ref{ther:qpsets-IM}, and Corollaries~\ref{thm:2-IM} and \ref{thm:NEW}.   
This clarifies a dynamical feature of invariant sets in the quasiperiodically forced systems that is directly determined by the ergodic partition and its associated visualization technique.  
Also, we apply the theory developed above to analysis of a nonlinear model of complex power grids that represents the short-term swing instability, which we name the Coherent Swing Instability 
(CSI) \cite{Susuki_JNLS09}. 
Preliminary results in this paper were published in the conference proceedings \cite{Susuki_CDC09,Mezic_IFACNOLCOS16}. 
This paper contains detailed discussion of the theory with a new proof and a set of new numerical simulations. 

The rest of this paper is organized as follows.
In Section~\ref{sec:set-up} we introduce set-up and mathematical preliminaries from dynamical systems theory.  
A theory of ergodic partition in measure-preserving flows is developed in Section~\ref{sec:ergodic}.  
Based on the result, the basin of attraction in the dissipative case is explored in \eqref{eq:basingen}.   
In Section~\ref{sec:chara} we provide a new characterization of invariant sets in the quasiperiodically forced dynamical systems.  
The developed theory is applied in Section~\ref{sec:example} to two simple examples of the quasiperiodically forced system and to a nonlinear model of the CSI phenomenon of a power grid.  
Conclusions of this paper are presented in Section~\ref{sec:outro}.

\section{Set-Up and Definitions}
\label{sec:set-up}

\subsection{Quasiperiodically Forced Dynamical Systems}

Throughout this paper, we address the following quasiperiodically forced dynamical system that evolves on a finite-dimensional metric space $M$:  for $\vct{m}\in{M}$ and $t\in\mathbb{R}$, 
\begin{equation}
\frac{{d}\vct{m}}{{d} t}=\vct{g}(\vct{m},t). 
\label{eqn:org_syst}
\end{equation}
The function $g$ is assumed to be \emph{quasiperiodic} on $t$ in the sense of Moser \cite{Moser_SIAMRev8}, that is, 
\[
\vct{g}(\vct{m},t)=\vct{G}(\vct{m},\itO_1t,\itO_2t,\ldots,\itO_Nt),
\]
where $\vct{G}(\vct{m},\theta_1,\theta_2,\ldots,\theta_N)$ is assumed to be a smooth vector-valued function and of period $2\pi$ in $\theta_1,\theta_2,\ldots,\theta_N$.  
The real numbers $\itO_1,\itO_2,\ldots,\itO_N$ are the $N$ basic angular frequencies and assumed to be rationally independent: there exists no $N$-dimensional integer vector $(k_1,k_2,\ldots,k_N)^\top$ ($\top$ stands for the transpose operation of real-valued vectors) in which the entries do not all vanish, satisfying the resonance relation:
\begin{equation}
\sum^{N}_{i=1}k_i\itO_i=0.
\label{eqn:incommen}
\end{equation}
The system \eqref{eqn:org_syst} is non-autonomous and transformed into an autonomous system defined on the augmented state space $X:=M\times\bbT^N$, in the same manner as in \cite{Wiggins:1992}.    
By introducing the $N$ variables $\theta_i=\itO_it\in\bbT$ ($i=1,2,\ldots,N$), we have
\begin{equation}
\frac{d\vct{m}}{dt}=\vct{G}(\vct{m},\theta_1,\theta_2,\ldots,\theta_N), \qquad
\frac{d\theta_i}{dt}=\itO_i \qquad i=1,2,\ldots,N.
\label{eqn:aug_syst}
\end{equation}
In this way, by taking trajectories of the augmented system \eqref{eqn:aug_syst} on the augmented state variable $\vct{x}:=(\vct{m},\theta_1,\theta_2,\ldots,\theta_N)\in X$, a \emph{flow}, that is, one-parameter group of diffeomorphisms is defined for the quasiperiodically forced system \eqref{eqn:org_syst}.

\subsection{Measure-Preserving Flows and Partition of State Space}
\label{subsec:mpf}

In Section~\ref{sec:ergodic}, as one type of flows induced by \eqref{eqn:aug_syst}, we will investigate a measure-preserving flow defined on a class of probability spaces, where ergodic theory has been developed \cite{Peterson:1983,Mane:1987}.  
The probability space considered here corresponds to the tuple $({X},\mathfrak{B}_{X},\mu)$, where ${X}$ is a compact metric space, $\mathfrak{B}_{X}$ is the Borel $\sigma$-algebra of ${X}$, and $\mu$ is a probability measure.  
A measure-preserving flow of the probability space is a one-parameter group of measure-preserving diffeomorphisms $\vct{S}^t: {X}\to{X}$, $t\in\bbR$ such that $\vct{S}^t$ is measure-preserving (for all $A\in\mathfrak{B}_{X}$ we have $\mu(\vct{S}^{-t}(A))=\mu(A)$), $\vct{S}^0$ the identity, and $\vct{S}^{t_1+t_2}=\vct{S}^{t_1}\circ\vct{S}^{t_2}$ for $t_1,t_2\in\bbR$.  

The important notion which we utilize throughout this paper is the partition of state space in dynamical systems.  
One motivation behind the following definitions is that we aim to locate such a partition that plays a role in the data-driven estimation of certain statistical properties of non-ergodic measure-preserving dynamical systems; we will show this later as the ergodic partition. 
The definitions of partition and measurable partition from \cite{Mezic_CHAOS9} are the following. 
\begin{definition}
A partition of ${X}$ is a family $\zeta$ of sets satisfying
\[
A,B\in\zeta \quad \Rightarrow \quad \mu(A\cap B)=0, \quad  
\mu\left({X}\setminus\bigcup_{A\in\zeta}A\right)=0.
\] 
The element of $\zeta$ containing a point $\vct{x}\in{X}$ is denoted by $\zeta(\vct{x})$.  
A partition $\zeta$ of ${X}$ is said to be measurable if there exists a countable family $\mathfrak{D}$ of measurable sets $\{D_i\}$ such that every $D_i$ is a union of elements of $\zeta$, and for any pair $A_1,A_2$ of elements of $\zeta$ there exists $D_j\in\mathfrak{D}$ such that $A_1\subset D_j$ and $A_2\subset D_j^{\rm c}$, where $D_j^{\rm c}$ stands for the complement of $D_j$ in ${X}$. 
The family $\mathfrak{D}$ is called a basis for $\zeta$.
\end{definition}

In Section~\ref{sec:ergodic} we use a product operation on the set of partitions of ${X}$.  
The product operation is defined in \cite{Mane:1987} as follows.
\begin{definition}
If $\zeta_n$, $n=1,\ldots,N$ are partitions, we then define their product $\vee^{N}_{n=1}\zeta_n$ as the partition whose elements are the sets of the form $\cap^{N}_{i=1}{A_i}$, 
for $A_i\in\zeta_i$, $i=1,\ldots,N$, satisfying $\mu\left(\cap^{N}_{i=1}A_i\right)\neq 0$. 
For a countable sequence of partitions, the notation $\vee^{\infty}_{n=1}\zeta_n$ will be used for the $\sigma$-algebra generated by $\cup^\infty_{n=1}\zeta_n$.   
\end{definition}

\subsection{Koopman Group}

Consider a flow $\vct{S}^t: {X}\to{X}$ ($t\in\mathbb{R}$) on a finite-dimensional space ${X}$ 
and denote by $\mathcal{F}$ a space of scalar-valued functions defined on ${X}$: $\mathcal{F}\ni f: {X}\to\mathbb{C}$, which we call the space of \emph{observables}.  
In the following, we suppose that the existence and uniqueness of solutions associated with the flow hold for all $t$.  
Then, we define the one-parameter group of linear operators $U^t$ ($t\in\bbR$) 
as a group of composition operators with $\vct{S}^t$: for $f\in\mathcal{F}$, 
\[
(U^tf)(\vct{x}):=f(\vct{S}^t(\vct{x}))=(f\circ\vct{S}^t)(\vct{x}).
\]
where it is known as the \emph{Koopman group} \cite{Lasota:1994,Marko_CHAOS22}.  
For spaces as $\mathcal{F}=\mathcal{C}^0(X)$ and $\mathcal{L}^p(X)$, the Koopman group is strongly continuous.  
Although the original flow $\vct{S}^t$ is possibly described by a \emph{nonlinear} differential equation 
and evolves on the \emph{finite}-dimensional space ${X}$, 
the operators $U^t$ are \emph{linear} but evolve on the \emph{infinite}-dimensional space $\mathcal{F}$.   
The \emph{eigenvalue} $\lambda\in\mathbb{C}$ and \emph{eigenfunction} $\phi_\lambda\in\mathcal{F}\setminus\{0\}$ of the Koopman group are defined as follows:
\[
(U^t\phi_\lambda)(\vct{x})=\exp(\lambda t)\phi_\lambda(\vct{x}).
\] 
The notion of the Koopman group and its spectral characterization will be used in the following sections.

\section{Ergodic Partition in Measure-Preserving Flows}
\label{sec:ergodic}

For completeness, in this section we extend the theory of ergodic partition in \cite{Mezic_CHAOS9} to the measure-preserving flow $\vct{S}^t$ on the probability space $(X,\mathfrak{B}_{X},\mu)$, which is introduced in Section~\ref{subsec:mpf}.  
This section consists of a few lemmas and a theorem.  
Their proofs are almost identical to the proofs for maps \cite{Mezic_CHAOS9} and appear in Appendices~\ref{appsec:1} and \ref{appsec:2}.  
We denote by $\mathcal{L}^1_\mu({X})$ the space of all $\mu$-integrable functions on ${X}$ and by $\mathcal{C}({X})$ the space of all real-valued continuous functions on ${X}$ endowed with the sup norm. 

Roughly speaking, an ergodic partition is a partition of the state space ${X}$ into invariant sets on which the dynamics are ergodic.  
The precise definition of ergodic partition is the following. 
\begin{definition}
A measurable partition $\zeta$ of ${X}$ is said to be \emph{ergodic} under the flow $\vct{S}^t$ if for any element $A$ of $\zeta$, (i) $A$ is invariant under $\vct{S}^t$, and (ii) there exists an invariant probability measure $\mu_A$ on $A$ such that the restriction of $\vct{S}^t$ to $A$, denoted by $\vct{S}^t|_A$, is an ergodic diffeomorphism on $A$, and for all $f\in\mathcal{L}^1_\mu({X})$, 
\begin{equation}
\int_{X} f{d}\mu=\int_{X}\left[\int_{A=:\zeta(\vct{x})} f|_A{d}\mu_A\right]{d}\mu(\vct{x}), 
\label{eqn:edt}
\end{equation}
where $f|_A$ stands for the restriction of $f$ to the ergodic element $A$, and $\mu(\vct{x})$ is again a probability measure on $X$.  
\end{definition}

It is remarked that the ergodic partition is defined for the entire state space $X$ with $\sigma$-algebra, but it can be relaxed in context of the $\sigma$-algebra of invariant sets which can be parts of the state space. 

Here, we denote by $f^\ast$ the time-average of $f$ under the flow $\vct{S}^t$ if the right-hand side of
\begin{equation}
f^\ast(\vct{x}):=\lim_{T\rightarrow\infty}\frac{1}{T}\int^T_0 f(\vct{S}^t(\vct{x})){d} t,
\end{equation}
exists for almost every (a.e.) point $\vct{x}\in{X}$ with respect to the measure $\mu$.  
Birkhoff's Ergodic Theorem \cite{NS,Arnold:1968} shows that for all $f\in\mathcal{L}^1_\mu({X})$, (i) $f^\ast(\vct{x})$ exists for a.e. point $\vct{x}\in{X}$ (with respect to $\mu$); (ii) $f^\ast(\vct{S}^t(\vct{x}))=f^\ast(\vct{x})$ for a.e. point $\vct{x}\in{X}$; and (iii) $\int_{X}f{d}\mu=\int_{X}f^\ast{d}\mu$.  
Here, we let $\itS$ be the set of points $\vct{x}\in{X}$ such that $f^\ast(\vct{x})$ exists for \emph{all} $f\in\mathcal{C}({X})$, and $\itS(f)$ the set of points $\vct{x}\in{X}$ such that $f^\ast(\vct{x})$ exists for a \emph{particular} $f\in\mathcal{C}({X})$.  
The following lemma is standard (see page 129 of \cite{Mane:1987} for discrete-time dynamical systems (maps): it is obvious for flows).    
\begin{lemma}
For a countable and dense set $S$ in $\mathcal{C}({X})$,
\begin{equation}
\itS=\bigcap_{f\in S}\itS(f).
\end{equation}
\end{lemma}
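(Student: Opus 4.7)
The plan is to establish the two inclusions separately, where one is immediate from the definitions and the other is a standard approximation argument using density in the sup norm.

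First, I would observe that the inclusion $\itS \subseteq \bigcap_{f\in S}\itS(f)$ is tautological: if $\vct{x}\in\itS$, then $f^\ast(\vct{x})$ exists for \emph{every} $f\in\mathcal{C}({X})$, and in particular for every $f\in S$. So the work is entirely in the reverse inclusion.

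For the reverse inclusion, fix $\vct{x}\in\bigcap_{f\in S}\itS(f)$ and an arbitrary $g\in\mathcal{C}({X})$; the goal is to show the Cesàro averages $A_T g(\vct{x}) := \frac{1}{T}\int_0^T g(\vct{S}^t(\vct{x}))\,{d}t$ converge as $T\to\infty$. Given $\varepsilon>0$, density of $S$ in $\mathcal{C}({X})$ (sup norm) furnishes $f\in S$ with $\|g-f\|_\infty<\varepsilon$. The key uniform estimate is
\[
|A_T g(\vct{x})-A_T f(\vct{x})|\leq \frac{1}{T}\int_0^T |g(\vct{S}^t(\vct{x}))-f(\vct{S}^t(\vct{x}))|\,{d}t \leq \|g-f\|_\infty<\varepsilon,
\]
which holds for every $T>0$ independent of the base point. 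Because $\vct{x}\in\itS(f)$, the limit $f^\ast(\vct{x})=\lim_T A_T f(\vct{x})$ exists, so passing to $\limsup$ and $\liminf$ in the estimate yields
\[
\limsup_{T\to\infty} A_T g(\vct{x}) - \liminf_{T\to\infty} A_T g(\vct{x}) \leq 2\varepsilon.
\]
Since $\varepsilon>0$ is arbitrary, $\limsup$ and $\liminf$ agree, hence $g^\ast(\vct{x})$ exists. As $g\in\mathcal{C}({X})$ was arbitrary, $\vct{x}\in\itS$.

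There is no real obstacle here; the only points requiring mild attention are that $t\mapsto g(\vct{S}^t(\vct{x}))$ is continuous (hence integrable on $[0,T]$) because both $g$ and the flow are continuous, and that the bound $\|g-f\|_\infty$ is finite since ${X}$ is compact. The argument is essentially the standard transfer of an almost-everywhere convergence statement along a dense subset to a pointwise (on $\itS$) convergence statement for all continuous observables, and it uses only the contractivity of the Cesàro averaging operator on the sup norm.
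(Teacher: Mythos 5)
Your proof is correct and is precisely the standard density/approximation argument that the paper alludes to by citing Ma\~n\'e (page 129) without writing out the details: one inclusion is trivial, and the other follows from the uniform contractivity of Ces\`aro averaging in the sup norm, which transfers convergence along the dense set $S$ to all of $\mathcal{C}(X)$. Nothing is missing.
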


Now, we have a set $\itS$ such that the time-averages of all continuous functions on ${X}$ exist.    
Then, the complimentary set $\itS^{\rm c}$ is of measure zero.  
This is because according to Birkhoff's Ergodic Theorem (i), each $\{\itS(f)\}^{\rm c}$ is of measure zero, and thus $\itS^{\rm c}:=\cup_{f\in S}\{\itS(f)\}^{\rm c}$ is a countable union of sets with zero measure, which is again of measure zero.  
The next lemma shows that the time-average of a continuous function induces a measurable partition on ${X}$.  
\begin{lemma}
\label{lemma:1}
Let $f$ be a continuous function on ${X}$.  
The family of level sets of $f^\ast$, 
\[
A_\alpha:=\{\vct{x} : \vct{x}\in\itS, f^\ast(\vct{x})=\alpha\},~~~\alpha\in\bbR,
\]
is a measurable partition of ${X}$.  
We denote by $\zeta_f$ this partition and call it the partition induced by the function $f$.  
\end{lemma}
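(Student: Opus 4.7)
The plan is to verify the two requirements of a measurable partition in turn: first the partition property, then the existence of a countable separating basis. Throughout, the key underlying fact is that, since $f$ is continuous, $f\circ \vct{S}^t$ is a jointly measurable function of $(\vct{x},t)$, so $f^\ast$, wherever it is defined as a pointwise limit of Riemann-type averages, is a Borel-measurable function on $\itS$. Equivalently, one can invoke Birkhoff's Ergodic Theorem, which guarantees not only pointwise existence on $\itS$ but also the measurability of the limit.

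First I would check that $\{A_\alpha\}_{\alpha\in\bbR}$ forms a partition. Disjointness is immediate from the definition, since $f^\ast$ is single-valued on $\itS$, so different levels cannot share a point. For the covering condition, observe that $\bigcup_{\alpha\in\bbR} A_\alpha = \itS$, and the complement $\itS^{\rm c}$ has $\mu$-measure zero by the remark preceding the lemma (it is a countable union of measure-zero exceptional sets $\{\itS(f)\}^{\rm c}$ for $f$ ranging over a countable dense subset of $\mathcal{C}({X})$). Hence $\mu({X}\setminus \bigcup_\alpha A_\alpha)=0$, as required.

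Next I would exhibit a countable basis $\mathfrak{D}$ for $\zeta_f$. Let $\{r_i\}_{i\in\mathbb{N}}$ be an enumeration of $\mathbb{Q}$ and set
\[
D_i := \{\vct{x}\in\itS : f^\ast(\vct{x}) < r_i\}, \qquad i\in\mathbb{N}.
\]
Each $D_i$ is measurable because $f^\ast$ is measurable on $\itS$, and each $D_i$ is by construction a union of level sets $A_\alpha$ (namely those with $\alpha < r_i$), so it is a union of elements of $\zeta_f$. To verify the separation property, take any two distinct elements $A_{\alpha_1}, A_{\alpha_2}\in\zeta_f$ with, without loss of generality, $\alpha_1<\alpha_2$. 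By density of $\mathbb{Q}$ in $\mathbb{R}$, pick $r_j\in\mathbb{Q}$ with $\alpha_1 < r_j < \alpha_2$. Then $A_{\alpha_1}\subset D_j$ and $A_{\alpha_2}\subset D_j^{\rm c}$, so $\mathfrak{D}=\{D_i\}$ is a basis in the sense of the definition.

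The only slightly delicate step I anticipate is the measurability of $f^\ast$ on $\itS$, which must hold before the level sets can even be called Borel sets; this follows from Birkhoff's theorem, which gives a measurable version of the time-average a.e., combined with the observation that on $\itS$ the pointwise limit exists everywhere and agrees with this measurable version. Apart from that, the argument is a clean rational-threshold construction and the proof is essentially identical to its discrete-time counterpart in \cite{Mezic_CHAOS9}.
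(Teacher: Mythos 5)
Your proof is correct and follows essentially the same approach as the paper: establish measurability of $f^\ast$ (as a pointwise limit of the finite-time averages $f_T$, or via Birkhoff), observe that the level sets partition $\itS$ while $\itS^{\rm c}$ is $\mu$-null, and then build a countable rational-indexed basis from pre-images under $f^\ast$. The only cosmetic difference is that you use sublevel sets $\{f^\ast < r_i\}$ where the paper uses pre-images of intervals with rational endpoints; both choices separate distinct levels in exactly the same way.
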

\begin{proof}
See Appedix~\ref{appsec:1}.
\end{proof}

Now, we can prove the first theorem stating that $\zeta_f$ induces the ergodic partition of ${X}$, which holds for general measure-preserving flows including the augmented system \eqref{eqn:aug_syst} for the quasiperiodically forced system \eqref{eqn:org_syst}. 
\begin{theorem}
\label{thm:1}
Consider the measure-preserving flow $\vct{S}^t: X\to X$ ($t\in\mathbb{R}$) associated with
\[
\left.\frac{d\vct{S}^t(\vct{x})}{dt}\right|_{t=0} =\vct{F}(\vct{x}) \quad \textrm{for each}~\vct{x}\in X
\]
where $\vct{F}: X\to\mathrm{T}X$ (tangent bundle of $X$) is a nonlinear vector field.   
Let $\zeta\sub{e}$ be the product of measurable partitions of ${X}$ induced by every $f\in S$:
\[
\zeta\sub{e}=\bigvee_{f\in S}\zeta_f.
\]
Then, $\zeta\sub{e}$ is the ergodic partition of ${X}$. 
\end{theorem}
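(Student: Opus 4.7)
The plan is to mirror the proof for maps in \cite{Mezic_CHAOS9}, replacing time averages over iterates by time averages over the flow. Throughout, I fix a countable dense set $S\subset\mathcal{C}(X)$ and recall that the elements of $\zeta\sub{e}=\bigvee_{f\in S}\zeta_f$ are (up to $\mu$-null sets) the nonempty joint level sets
\[
A=\bigcap_{f\in S}\{\vct{x}\in\itS:f^\ast(\vct{x})=\alpha_f\},
\]
indexed by sequences $(\alpha_f)_{f\in S}\in\bbR^S$. I have to verify the two defining conditions of an ergodic partition: invariance of each $A\in\zeta\sub{e}$ under $\vct{S}^t$, and the existence of an ergodic invariant measure $\mu_A$ on $A$ for which the disintegration identity \eqref{eqn:edt} holds.

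First I would establish invariance. By Birkhoff's Ergodic Theorem applied to each $f\in S$, the function $f^\ast$ is $\vct{S}^t$-invariant wherever it is defined, so each level set $\{f^\ast=\alpha\}\cap\itS$ is invariant, and hence so is the countable intersection $A$. Measurability of $\zeta\sub{e}$ follows from Lemma~\ref{lemma:1} together with the definition of the product of measurable partitions in Section~\ref{subsec:mpf}. The conditional measures $\mu_A$ that give equation \eqref{eqn:edt} are obtained by invoking Rokhlin's disintegration theorem for the measurable partition $\zeta\sub{e}$ of the standard probability space $(X,\mathfrak{B}_X,\mu)$; invariance of $\mu_A$ under $\vct{S}^t|_A$ is then a consequence of the $\vct{S}^t$-invariance of both the partition and the ambient measure $\mu$, together with the essential uniqueness of the disintegration.

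The substantive step is ergodicity of $(\vct{S}^t|_A,\mu_A)$. Here I would exploit the continuous dependence of time averages on the sup norm: for any $f,g\in\mathcal{C}(X)$ and any $\vct{x}\in\itS(f)\cap\itS(g)$,
\[
|f^\ast(\vct{x})-g^\ast(\vct{x})|
=\left|\lim_{T\to\infty}\frac{1}{T}\int_0^T(f-g)\bigl(\vct{S}^t(\vct{x})\bigr)\,dt\right|
\le\|f-g\|_\infty.
\]
Since $f^\ast$ is constant on $A$ for every $f\in S$, density of $S$ in $\mathcal{C}(X)$ together with this estimate shows that $g^\ast$ is constant $\mu_A$-a.e.\ on $A$ for every $g\in\mathcal{C}(X)$. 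Because $\mathcal{C}(X)$ is dense in $\mathcal{L}^1_{\mu_A}(A)$ and the ergodic averaging operator is a contraction on $\mathcal{L}^1_{\mu_A}$, the same constancy extends to every $g\in\mathcal{L}^1_{\mu_A}(A)$. By the $\mathcal{L}^1$ form of Birkhoff's theorem applied to $(\vct{S}^t|_A,\mu_A)$, constancy of time averages of all $\mathcal{L}^1_{\mu_A}$ observables is equivalent to ergodicity, which is what we need.

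The main obstacle, as in the map case, is handling the measure-theoretic bookkeeping cleanly: ensuring that the disintegration $\{\mu_A\}$ is well-defined (for which one needs $\zeta\sub{e}$ to be measurable, not merely a partition), ensuring that the $\mu$-null sets thrown away by Birkhoff do not conflict with the uncountable family of conditional spaces $A$, and verifying that the contraction estimate above survives the passage from $\mathcal{C}(X)$ to $\mathcal{L}^1_{\mu_A}$. All of these are resolved by working with the countable dense set $S$ so that only countably many Birkhoff exceptional sets arise, and by invoking Rokhlin's theorem for the standard probability space $(X,\mathfrak{B}_X,\mu)$. The flow setting introduces no new difficulty beyond replacing Ces\`aro sums by time integrals, so the argument carries over verbatim from \cite{Mezic_CHAOS9}, which is why the authors relegate the details to Appendix~\ref{appsec:2}.
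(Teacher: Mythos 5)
Your argument is correct, but it constructs the conditional measures $\mu_A$ by a genuinely different route than the paper. You invoke Rokhlin's disintegration theorem for the measurable partition $\zeta\sub{e}$ of the standard probability space $(X,\mathfrak{B}_X,\mu)$, obtaining \eqref{eqn:edt} and $\mu_A(A)=1$ essentially for free, and then you must work to establish ergodicity of $(\vct{S}^t|_A,\mu_A)$, which you do via the sup-norm contraction $|f^\ast-g^\ast|\le\|f-g\|_\infty$ on the set where both time averages exist, density of $S$ in $\mathcal{C}(X)$, density of $\mathcal{C}(X)$ in $\mathcal{L}^1_{\mu_A}(A)$, and the fact that constancy of all $\mathcal{L}^1$ time averages is equivalent to ergodicity. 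The paper instead never mentions Rokhlin: for each $A\in\zeta\sub{e}$ the constancy of time averages of continuous functions on $A$ makes $L_A(f):=f^\ast(x)$, $x\in A$, a well-defined positive normalized linear functional on $\mathcal{C}(X)$, and the Riesz Representation Theorem then produces $\mu_A$ directly; one then verifies by hand that $\mu_A$ is invariant, supported on $A$ (using Urysohn functions and exhaustion by compacta of $A^{\rm c}$), and ergodic (via density of $\mathcal{C}(X)|_A$ in $\mathcal{L}^1_{\mu_A}(A)$ and the criterion that time averages equal space averages on a dense set, Proposition~2.2 of \cite{Mane:1987}), and then appeals to the disintegration theorem of \cite{Mane:1987} only at the end to obtain \eqref{eqn:edt}. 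Your approach is shorter and leans on abstract machinery to produce the family $\{\mu_A\}$; the paper's approach is more constructive and exhibits each $\mu_A$ explicitly as the representing measure of the time-average functional, which is useful when one actually wants to compute with $\mu_A$. One small caveat in your write-up: you should note explicitly that the set $\itS$ on which time averages of all continuous functions exist satisfies $\mu_A(\itS)=1$ for $\mu$-almost every $A$ (this follows from $\mu(\itS^{\rm c})=0$ and the disintegration), since otherwise the constancy of $f^\ast$ on the literal level set $A\cap\itS$ would not immediately give constancy $\mu_A$-a.e.
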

\begin{proof}
See Appendix~\ref{appsec:2}.
\end{proof}

By combining the above theorem and lemma~20 in \cite{Mezic_PD197}, we have the following corollary based on a finite number of basis functions. 
\begin{corollary}
Assume there exists a complete system of functions $\{f_i\}$, $f_i\in\mathcal{C}(X)$, $i\in\mathbb{N}_{>0}$ (set of all natural numbers except for $0$) i.e. finite linear combinations of $f_i$ are dense in $\mathcal{C}(X)$.  
The ergodic partition $\zeta\sub{e}$ is
\[
\zeta\sub{e}=\bigvee_{i\in\mathbb{N}_{>0}}\zeta_{f_i}.
\]
\end{corollary}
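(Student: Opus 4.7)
The plan is to prove the corollary by showing mutual refinement between $\bigvee_{i\in\mathbb{N}_{>0}}\zeta_{f_i}$ and the ergodic partition $\zeta\sub{e}=\bigvee_{f\in S}\zeta_f$ furnished by Theorem~\ref{thm:1}, where $S$ is a countable dense subset of $\mathcal{C}(X)$. Since partitions of $X$ form a lattice under refinement (with equality understood mod $\mu$-null sets), it suffices to establish the two inequalities $\bigvee_i\zeta_{f_i}\preceq\zeta\sub{e}$ and $\zeta\sub{e}\preceq\bigvee_i\zeta_{f_i}$.

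For the first (easy) direction, each $f_i$ lies in $\mathcal{C}(X)$, so $\zeta_{f_i}$ is one of the factors appearing in the product that defines $\zeta\sub{e}$ if $f_i\in S$, and in any case the level sets of $f_i^\ast$ are common to both products up to modifying $S$ by adjoining $\{f_i\}$ (which does not change $\zeta\sub{e}$ by Theorem~\ref{thm:1}, since $\zeta\sub{e}$ does not depend on the choice of the countable dense set). Thus $\bigvee_i\zeta_{f_i}$ is a coarsening of $\zeta\sub{e}$.

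The other (key) direction uses completeness of $\{f_i\}$ together with the linearity of time-averaging. For any $g\in\mathcal{C}(X)$, pick a sequence of finite linear combinations $h_n=\sum_{k=1}^{K_n}c_k^{(n)}f_{i_k^{(n)}}$ with $h_n\to g$ uniformly on $X$. On the full-measure set $\itS$, the time-average operator is linear, so
\[
h_n^\ast(\vct{x})=\sum_{k=1}^{K_n}c_k^{(n)}f_{i_k^{(n)}}^\ast(\vct{x})\qquad(\vct{x}\in\itS).
\]
Two points $\vct{x},\vct{y}\in\itS$ belonging to the same element of $\bigvee_i\zeta_{f_i}$ satisfy $f_i^\ast(\vct{x})=f_i^\ast(\vct{y})$ for every $i$, hence $h_n^\ast(\vct{x})=h_n^\ast(\vct{y})$ for every $n$. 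Uniform convergence $h_n\to g$ implies $\|h_n^\ast-g^\ast\|_\infty\to 0$ on $\itS$ (by $|h_n^\ast-g^\ast|\le\|h_n-g\|_\infty$), so $g^\ast(\vct{x})=g^\ast(\vct{y})$, i.e. $\vct{x}$ and $\vct{y}$ lie in the same element of $\zeta_g$. Hence $\bigvee_i\zeta_{f_i}\succeq\zeta_g$ for every $g\in\mathcal{C}(X)$, and in particular for every $g\in S$, so $\bigvee_i\zeta_{f_i}\succeq\bigvee_{g\in S}\zeta_g=\zeta\sub{e}$.

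The main obstacle I anticipate is bookkeeping rather than conceptual: one must justify invoking Lemma~20 of \cite{Mezic_PD197} (or reprove it here) to guarantee that the ergodic partition is independent of the countable dense set and that the above refinement relations are preserved under countable products modulo $\mu$-null sets. Once this is in hand, combining the two inequalities yields $\bigvee_{i\in\mathbb{N}_{>0}}\zeta_{f_i}=\zeta\sub{e}$, establishing the corollary.
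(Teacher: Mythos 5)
Your proof is correct and follows the route the paper itself indicates: the paper simply cites Lemma~20 of \cite{Mezic_PD197} in combination with Theorem~\ref{thm:1}, and your argument (linearity of time-averaging plus $\|h_n^\ast-g^\ast\|_\infty\le\|h_n-g\|_\infty$ and density of finite linear combinations) is exactly the content that lemma supplies. In effect you have reproved the cited lemma rather than taken a different path, so this is essentially the same approach, just written out in full.
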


For a given function $f$, it is obvious that each level set $A_\alpha=\{\vct{x} : \vct{x}\in\itS, f^\ast(\vct{x})=\alpha\}$ ($\alpha\in\bbR$) as an element of $\zeta_f$ is invariant under $\vct{S}^t$.  
In \cite{Mezic_CHAOS9} the authors proposed to use the level sets $A_\alpha$, which are directly computed with the time-averaging of $f$ under $\vct{S}^t$, for identifying and visualizing invariant sets in discrete-time dynamical systems possessing a smooth invariant measure.  
Theorem~\ref{thm:1} implies that the same computation can be used for visualization of invariant sets for the measure-preserving flow, namely, continuous-time dynamical systems with a smooth invariant measure.  
Since, as shown in Section~\ref{sec:set-up}, the quasiperiodically forced system \eqref{eqn:org_syst} defined on the state space $M$ is transformed to the flow defined on the augmented state space $X=M\times\bbT^N$, the ergodic partition theory is applicable to visualization of invariant sets in $X$ for the original quasiperiodically forced system \eqref{eqn:org_syst} possessing a smooth invariant measure.  
However, in the quasiperiodically forced system \eqref{eqn:org_syst} we are interested in dynamics on $M$ not on $X=M\times\mathbb{T}^N$.  
While for $N=1$ it is clear that every intersection of the invariant set in $M\times\mathbb{T}^1$ with $M$ is an invariant set of the associated Poincar\'e map.  
The relationship in the quasiperiodically forced system (namely, $N\geq 2$) is much less clear. 
We make it precise in Section~\ref{sec:chara}.

Here, let us introduce the connection of time-average $f^\ast$ and the Koopman group.  
It is obvious that for any $f\in\mathcal{L}^1_\mu(X)$ its time-average $f^\ast$ is an eigenfuction of the operators $U^t$ at eigenvalue $0$:  for a.e. point $\vct{x}\in{X}$ with respect to $\mu$,
\[
(U^tf^\ast)(\vct{x})
=f^\ast(\vct{S}^t(\vct{x}))
=f^\ast(\vct{x})
=\exp(0t)f^\ast(\vct{x}). 
\]
Since the eigenfunctions at $0$ are invariant under the flow, we have a complete characterization of (possibly non-smooth) invariant sets.

\section{Basin of Attraction in Dissipative Flows} 
\label{eq:basingen}

In the last of the previous section, we indicate that the time-average of an observable enables the visualization of low-dimensional slices through high-dimensional invariant sets of the quasiperiodically forced system with a smooth invariant measure.    
Here, we consider a more general class of the quasiperiodically forced systems \emph{with dissipation} and will point out that the use of time-average works for characterizing an important invariant set---\emph{basin of attraction}.

In Section~\ref{sec:ergodic} we considered the measure-preserving flow on a compact metric space.   
\emph{Every continuous flow} on a compact metric space preserves a measure.  
This is the content of the so-called Krylov-Bogolyubov theorem \cite{Sinai:1989}.  
However, this might be a useless statement if our intent is to study the behavior of trajectories from their time-averages, by the method used in the measure-preserving cases in Section~\ref{sec:ergodic}.   
For example, let us take a simple dissipative flow,
\[
\frac{{d} x}{{d} t}=-\lambda x \qquad
x\in\bbR, \qquad
\lambda>0.
\]  
All of the initial conditions converge to $x=0$ as $t\to\infty$ along the flow $S^t(x)=\exp(-\lambda t)x$.  
The invariant measure clearly exists and is the Dirac measure supported at $x=0$ (to which any ``initial" measure defined on $\mathbb{R}$ evolves).  
Thus, the following statement holds: for a function $f:\bbR\to\mathbb{C}$, 
\begin{equation}
f^\ast(x)=\int_{\mathbb{R}}f{d}\mu,
\label{eq:spacetime}
\end{equation}
for a.e. point $x\in\mathbb{R}$ with respect to the Dirac measure.  
However, that excludes the whole real line except for the origin itself.  
The situation feels better though.  
For example, it is easy to see that the time-average of any continuous function on $\bbR$ is just its value at $0$: $f^*(x)=f(0)$.  
Note that for the measure-preserving case in Section~\ref{sec:ergodic}, we basically considered integrable functions for which time-averages exist.  
We can not do that in the dissipative case.  
A function that is $1$ everywhere on a finite interval $(\underline{a},\overline{a})$ containing $0$, except at $0$ where its value is $0$, is clearly integrable (being a union of two simple functions). 
Then, its time-average under the dissipative flow is identically $1$, and its integral with respect to the invariant Dirac measure is $0$.  
It turns out that in dissipative systems, it is best for the time-average to work with continuous functions.

Adopting that idea, the whole method of ergodic partitioning can be extended to capture basins of attraction for continuous flows that are not necessarily measure-preserving. 
Consider a continuous flow $\vct{S}^t: X\to X$, where $X$ is not necessary compact, and label by $F$ the set on which time averages of continuous functions do not exist. 
On the set $X\setminus F$, consider a set $C$ on which time-averages of continuous functions (or, better, a countable, separating set of continuous functions) are constant.\footnote{There can be uncountably many such sets inside $X\setminus F$: for example, let $\vct{S}^t$ be the identity map, mapping every point into itself.} 
Then, by the same construction described in Section~\ref{sec:ergodic} and Appendix~\ref{appsec:2}, there is an invariant measure $\mu_C$ such that
\begin{equation}
f^*(\vct{x})=\int_C f d\mu_C
\label{eq:spacetime1}
\end{equation}
for any continuous function $f$, $\vct{x}\in C$. Such a measure is called a physical measure \cite{Young_JSP108}, provided $M$ is equipped with a measure $\mu$ that $\vct{S}^t$ does not preserve (but we are interested in)---say Lebesgue measure---and $C$ contains an open (and thus positive measure) set in $M$.\footnote{In light of this discussion, one might say that the notion of ``ergodic measure" should be preserved for such constructs even in dissipative systems. 
Namely, many ergodic measures we obtain in measure-preserving system are certainly physically important, although their support does not contain an open set. 
The tricky part is that due to Birkhoff's Ergodic Theorem, ergodic measures are associated with time averages of  integrable functions. In dissipative systems, this does not work (see the above example). 
We could keep the requirement of equality of space and time averages, like in \eqref{eq:spacetime}, but for continuous functions only. 
Physical measures would then be ergodic measures with support that contains an open set.} 
Suppose that the state space of a continuous flow admits a finite number of attractors, and that the union of their basins of attraction is of full measure. 
We state the result for flows (continuous-time systems) to emphasize that considerations here work for both discrete and continuous time.
\begin{theorem}
\label{thm:basin1}
Let the system
\[
\left.\frac{d\vct{S}^t(\vct{x})}{dt}\right|_{t=0}=\vct{F}(\vct{x}), \quad \vct{x}\in X\subset\bbR^n
\]
with a flow $\vct{S}^t$ have a finite number $N$ of attractors with basins of attraction $A_j, j=1,...,N$, such that $\mu(\cup_j A_j)=\mu(X)$, where $\mu$ is the Lebesgue measure. 
Also, let the time averages of continuous functions exist everywhere on a set $X\setminus F$, where $\mu(F)=0$. 
Then, the time-average $h^*(\vct{x})$ of a continuous function $h\in\mathcal{C}(X)$ is a piecewise constant eigenfunction of the Koopman operators $U^t$ at eigenvalue $0$ defined a.e. point with respect to $\mu$.
\end{theorem}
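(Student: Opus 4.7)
The plan is to split the conclusion into two claims: first, that $h^\ast$ is a $\mu$-a.e.\ defined eigenfunction of $U^t$ at eigenvalue $0$; second, that $h^\ast$ is (up to a $\mu$-null set) constant on each basin $A_j$, so that it is piecewise constant with at most $N$ distinct values. The eigenfunction part is soft and essentially repeats an observation already made at the end of Section~\ref{sec:ergodic}; the work is in the piecewise-constant part, which is where the basin structure enters.

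For the first claim, I would fix $\vct{x}\in X\setminus F$ and $\tau\in\bbR$ and write
\[
(U^\tau h^\ast)(\vct{x})=h^\ast(\vct{S}^\tau(\vct{x}))=\lim_{T\to\infty}\frac{1}{T}\int_0^T h(\vct{S}^{t+\tau}(\vct{x}))\,dt.
\]
A change of variables reduces the difference $h^\ast(\vct{S}^\tau(\vct{x}))-h^\ast(\vct{x})$ to two boundary terms of the form $\frac{1}{T}\int_I h(\vct{S}^s(\vct{x}))\,ds$ over intervals $I$ of length $|\tau|$. Since $\vct{x}$ lies in some basin $A_j$, its forward orbit has compact closure (accumulating on the compact attractor $\Lambda_j$) and $h$ is bounded along it, so both boundary terms vanish as $T\to\infty$. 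Hence $U^\tau h^\ast=h^\ast$ $\mu$-a.e., which is precisely the eigenvalue equation at $\lambda=0$.

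For the second claim, fix $j$ and $\vct{x}\in A_j\cap(X\setminus F)$. By definition of the basin, the $\omega$-limit set $\omega(\vct{x})$ is contained in $\Lambda_j$, so the trajectory $\vct{S}^t(\vct{x})$ eventually stays inside every neighborhood of $\Lambda_j$. Using the construction leading to equation~\eqref{eq:spacetime1}, the Cesaro averages of $h$ along the orbit converge to an integral against an invariant (physical) probability measure $\mu_j$ concentrated on $\Lambda_j$; that is, $h^\ast(\vct{x})=\int_{\Lambda_j}h\,d\mu_j=:c_j$, a constant depending only on $j$ and $h$. Since $\mu(\bigcup_j A_j)=\mu(X)$ and $\mu(F)=0$, the full-measure set $\bigcup_j (A_j\cap(X\setminus F))$ is partitioned into at most $N$ level sets of $h^\ast$, making $h^\ast$ piecewise constant $\mu$-a.e.; combined with the first claim, this proves the theorem.

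The step I expect to be the main obstacle is the identification $h^\ast\equiv c_j$ on the \emph{whole} basin. A priori $\Lambda_j$ could support several ergodic invariant measures, and different orbits in $A_j$ could pick up different ergodic components, producing several values of $h^\ast$ per basin. Closing this gap requires either invoking the working notion of ``attractor'' in force here (supporting a unique physical measure, in the sense of the footnote preceding the theorem) or, more carefully, applying the ergodic decomposition to $\vct{S}^t|_{\Lambda_j}$ and using continuity of $h$ together with $\vct{S}^t(\vct{x})\to\Lambda_j$ to argue that, for $\vct{x}\in A_j\cap(X\setminus F)$, only one component contributes to the Cesaro limit. Granting this, the argument above yields Theorem~\ref{thm:basin1}.
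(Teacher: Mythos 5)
Your approach is correct but takes a genuinely different route from the paper's. For the eigenfunction claim you argue directly on the Cesaro averages: after the change of variables, $(U^\tau h^*)(\vct{x})-h^*(\vct{x})$ is controlled by two boundary terms of size $O(|\tau|/T)$, which vanish because $h$ is bounded along the precompact forward orbit. The paper instead represents $h^*(\vct{x})=\int_C h\,d\mu_C$ via the physical measure of \eqref{eq:spacetime1} and uses the invariance of $\mu_C$ under $\vct{S}^t$ to get $(U^t h^*)(\vct{x})=\int_C (U^t h)\,d\mu_C=\int_C h\,d\mu_C=h^*(\vct{x})$. Your argument is more elementary in that it needs only existence of the limit and boundedness of $h$ on the orbit closure, whereas the paper's embeds the theorem into the physical-measure machinery developed just before it; both are valid and both silently use the identity $(U^t h^*)(\vct{x})=(U^t h)^*(\vct{x})$, which you at least justify explicitly.

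On the piecewise-constant claim your caveat is well-placed, and it applies equally to the paper's own proof. The computation in the paper shows that $h^*$ is constant on each ergodic set $C$ appearing in \eqref{eq:spacetime1}, but it does not show that each basin $A_j$ is, modulo a $\mu$-null set, a single such $C$. That is exactly the hypothesis you flag: each attractor must carry a unique physical measure whose basin exhausts $A_j$ up to $\mu$-measure zero. Without it, $h^*$ is still an eigenfunction at $0$ and is constant on each ergodic set, but could take infinitely many values even though there are finitely many attractors, so ``piecewise constant'' in the strong sense of at most $N$ values $\mu$-a.e.\ would not follow. You have correctly identified the implicit assumption needed to close either proof; the ergodic-decomposition refinement you sketch is the more honest route if one wants to avoid simply assuming uniqueness of the physical measure per attractor.
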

\begin{proof}
For any point $\vct{x}\in A_j,$ there exists a set $C$ such that
\begin{align}
(U^t h^*)(\vct{x})&=U^t \lim_{T\to \infty}\frac{1}{T}\int_0^{T} h(\vct{S}^\tau(\vct{x}))d\tau\nonumber \\
&= \int_C (U^t h) d\mu_C\nonumber \\
&=\int_C (h\circ\vct{S}^t) d\mu_C\nonumber \\
&=\int_C h d\mu_C \nonumber \\
&=h^*(\vct{x}), \nonumber
\end{align}
where in the second line we utilized \eqref{eq:spacetime1}, and transitioning from line 3 to 4 we used the fact that $\mu_C$ is invariant under $\vct{S}^t$. 
Since
\[
(U^t h^*)(\vct{x})=h^*(\vct{x})
\]
is exactly the equation which $h^*(\vct{x})$ has to satisfy in order to be an eigenfunction of $U^t$, and the basin of attraction $A_j$ is arbitrary, by taking into account that $\mu(\cup_j A_j)=\mu(X)$  the proposition is proven.
\end{proof}

The concept of the ergodic partition in dissipative systems is applied to a more general class of systems than those treated in Theorem~\ref{thm:basin1}. 
It provides ergodic measures on invariant sets that are not necessarily attractors---because they do not have an open set of initial conditions converging to them. 
Yet, these sets are dynamically important and can not be refined without losing the equivalence of space and time averages.

More precisely, the ergodic partition $\zeta\sub{e}$ of $M$ under $\vct{S}^t$ (not necessarily measure-$\mu$ preserving) is a partition into sets $C_{\alpha}$, where $\alpha$ is a member of an indexing set, such that on each set $C_{\alpha}$ (ergodic set) there exists an ergodic measure $\mu_{C_\alpha}$ such that
\begin{enumerate}
\item $\mu_{C_{\alpha}}(C_{\alpha})=1,$
\item  For every $f\in\mathcal{C}(M),f^{\ast}(\vct{x}\in C_{\alpha})=\int_{C_{\alpha}%
}fd\mu_{C_{\alpha}},$  for a.e $\vct{x}$ point with respect to $\mu$.
\end{enumerate}
The last condition emphasizes the role of the measure $\mu$---the time averages are equal to space averages with respect to $\mu_{C_\alpha}$, but their equality is almost everywhere with respect to measure $\mu$ that might be of our interest. 
In this way, we have escaped the realm of the Krylov-Bogolyubov Theorem, which, in the case when dynamics are not measure-preserving, neglects dynamics on possibly large swaths of the state space.

\section{Sample-Based Characterization of Invariant Sets}
\label{sec:chara}

In this section, we provide a characterization of invariant sets in the quasiperiodically forced system \eqref{eqn:org_syst}.  
Generally speaking, understanding invariant structures of \eqref{eqn:org_syst} is not easy because the associated 
portrait of $M$ changes with time in an aperiodic manner.  
As shown in Section~\ref{sec:ergodic}, the ergodic partition theory enables one to visualize an invariant set as its low-dimensional slice in $M$ 
by the time-averaging technique.  
Also, in \eqref{eq:basingen} it was shown that the time-averaging technique works for dissipative case to visualize the basin of attraction.  
The obtained slice here is just a sample of the invariant set at a \emph{particular} initial time (in other words, an initial condition in $\bbT^N$).  
Because of the aperiodic nature, such a sample does not seem to provide complete information on the entire structure of invariant set in the augmented state space $M\times\bbT^N$.  
However, we will prove that a boundedness property of the invariant set in $M\times\bbT^N$ is captured by means of one sample of it in $M$ (see Theorem~\ref{thm:2-IM}).  
In the following, in order to encompass both the cases in Section~\ref{sec:ergodic} and \eqref{eq:basingen}, we consider the original state space $M$ that is metric and not necessarily compact.  

A  continuous, linear skew-product flow $\vct{S}^t$ (diffeomorphism) on the augmented state space 
$X:=M\times\bbT^N$ derived from the quasiperiodically forced system \eqref{eqn:org_syst} is described below:  for $\vct{x}=(\vct{m},\vct{\theta})\in M\times\bbT^N$,
\begin{equation}
\vct{S}^t(\vct{x}):=
\left(\vct{S}^t_{M}(\vct{m},\vct{\theta}),\vct{S}^t_\mathit{\Omega}(\vct{\theta})\right), 
\end{equation}
where $\vct{S}^t_{M}: M\times\bbT^N\to M$ is the continuous map defined by trajectories of the original system \eqref{eqn:org_syst}, and $\vct{S}^t_\mathit{\Omega}: \bbT^N\to\bbT^N$ the linear (continuous) flow on the torus $\bbT^N$. 
For a fixed $\vct{\theta}_0\in\bbT^N$, 
we will write the orbit on $\bbT^N$ through $\vct{\theta}_0$ as $\mO_{\bbT^N}(\vct{\theta}_0):=\{\vct{\theta} : \vct{\theta}=\vct{S}^t_\mathit{\Omega}(\vct{\theta}_0)\in\bbT^N, t\in\bbR\}$.  
Since the $N$ basic frequencies in the original system \eqref{eqn:org_syst} are rationally independent, the following lemma is obvious.  
\begin{lemma}
\label{lemma:dense}
For any $\vct{\theta}_0\in\mathbb{T}^N$, $\mO_{\bbT^N}(\vct{\theta}_0)$ is dense in $\bbT^N$.  
\end{lemma}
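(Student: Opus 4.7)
The plan is to invoke the classical Kronecker–Weyl equidistribution theorem for linear flows on the torus, adapted to continuous time. The flow is explicitly $\vct{S}^t_\mathit{\Omega}(\vct{\theta}) = \vct{\theta} + t\vct{\mathit{\Omega}} \pmod{2\pi}$ with $\vct{\mathit{\Omega}} = (\itO_1,\ldots,\itO_N)$. Since every $\vct{S}^t_\mathit{\Omega}$ is a translation of $\bbT^N$, the orbit $\mO_{\bbT^N}(\vct{\theta}_0)$ is just the translate by $\vct{\theta}_0$ of the orbit $\mO_{\bbT^N}(\vct{0})$ through the origin, and translations are homeomorphisms of $\bbT^N$. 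So I would first reduce the claim to showing $\mO_{\bbT^N}(\vct{0})$ is dense, i.e., that $\{t\vct{\mathit{\Omega}} \pmod{2\pi} : t\in\bbR\}$ is dense in $\bbT^N$.

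Next I would deduce density from equidistribution by Weyl's criterion. For any character $\chi_{\vct{k}}(\vct{\theta}) = \exp\bigl(\mathrm{i}\sum_{j=1}^{N} k_j \theta_j\bigr)$ with $\vct{k}\in\mathbb{Z}^N\setminus\{0\}$, I compute the time average along the orbit through $\vct{0}$:
\begin{equation}
\lim_{T\to\infty}\frac{1}{T}\int_0^T \chi_{\vct{k}}\bigl(\vct{S}^t_\mathit{\Omega}(\vct{0})\bigr)\,dt
=\lim_{T\to\infty}\frac{1}{T}\int_0^T \exp\!\Bigl(\mathrm{i} t \sum_{j=1}^N k_j\itO_j\Bigr)dt.
\end{equation}
Because the $\itO_j$ satisfy the rational independence condition \eqref{eqn:incommen}, the exponent $\sum_j k_j\itO_j$ is nonzero, so the integral evaluates to $\frac{e^{\mathrm{i}T\sum k_j\itO_j}-1}{\mathrm{i}T\sum k_j\itO_j}\to 0$ as $T\to\infty$. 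For $\vct{k}=\vct{0}$ the time average is trivially $1$, which coincides with the integral against normalized Haar measure on $\bbT^N$.

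From here, since finite linear combinations of the characters $\chi_{\vct{k}}$ are uniformly dense in $\mathcal{C}(\bbT^N)$ by Stone–Weierstrass (trigonometric polynomials), the equality of time average and Haar-integral extends by a standard $3\varepsilon$-approximation to every $f\in\mathcal{C}(\bbT^N)$. Hence the orbit $\mO_{\bbT^N}(\vct{0})$ is equidistributed with respect to Haar measure. Equidistribution implies density: if some nonempty open set $U\subset\bbT^N$ were disjoint from the orbit, then by Urysohn one could construct a nonnegative continuous bump function $f$ supported in $U$ whose Haar integral is positive but whose time average along the orbit is $0$, contradicting equidistribution. Translating by $\vct{\theta}_0$ finishes the argument.

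I do not expect a serious obstacle: this is the classical continuous-time Kronecker–Weyl theorem, and all three ingredients (reduction by translation, vanishing of nonzero Fourier modes, Stone–Weierstrass density) are standard. The only care needed is to verify the equidistribution limit uniformly enough to allow the $\mathcal{C}(\bbT^N)$-approximation, which is immediate from the explicit rate $\mO(1/T)$ computed above for each individual character.
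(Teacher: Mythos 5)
Your proof is correct. Note, though, that the paper itself gives no proof of this lemma; it simply states that the result is ``obvious'' given the rational independence of $\itO_1,\ldots,\itO_N$, because it is the classical Kronecker--Weyl theorem for linear flows on the torus. You have filled in the gap with the standard equidistribution argument: reduce by translation to the orbit through the origin, verify Weyl's criterion on characters using the non-resonance condition \eqref{eqn:incommen} to see that $\sum_j k_j\itO_j\neq 0$ for $\vct{k}\neq\vct{0}$, extend to all of $\mathcal{C}(\bbT^N)$ via Stone--Weierstrass, and conclude density from equidistribution by a bump-function argument. This is fully rigorous, and in fact proves the stronger statement that the orbit is equidistributed with respect to Haar measure, not merely dense. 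A slightly lighter alternative that delivers only density is to observe that the closure of $\{t\vct{\itO}\bmod 2\pi:t\in\bbR\}$ is a closed connected subgroup of $\bbT^N$, hence a subtorus, and any proper subtorus is annihilated by some nonzero $\vct{k}\in\mathbb{Z}^N$, i.e.\ $\sum_j k_j\itO_j=0$, contradicting \eqref{eqn:incommen}; but your equidistribution route is equally standard and gives more.
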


Now, we investigate an invariant set in the augmented system \eqref{eqn:aug_syst}.  
Let $I\subseteq X$ be a positively invariant set of \eqref{eqn:aug_syst} that is supposed to be closed\footnote{If $I$ is invariant and open, then its closure $\cl(I)$ is still invariant.}.  
It follows from Lemma~\ref{lemma:dense} and by closure that the following decomposition of $I$ holds:
\begin{equation}
I=\bigcup_{\theta\in \bbT^N} A_\theta\times\{\vct{\theta}\},
\label{eqn:dI}
\end{equation}
where $A_{\theta}$ is a closed subset of $M$.  
Let us denote by $A_\thz$ an intersection or sample of the invariant set $I$ at $\vct{\theta}=\vct{\theta}_0$. 
The next lemma provides a topological property of the intersections $A_\theta$. 
\begin{lemma}
\label{lemma:topo}
For each $\epsilon>0$ there exists a positive constant $\delta$ so that $|\vct{\theta}-\vct{\theta}_0|_{\mathbb{T}^N}<\delta$ implies $d(A_\theta,A_\thz)<\epsilon$, where $d(A_{\theta},A_\thz)$ is the Hausdorff distance that induces a topology on the family of all closed subsets of $M$. 
\end{lemma}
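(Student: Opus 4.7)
The plan is to establish Hausdorff continuity of the fiber map $\vct{\theta}\mapsto A_{\vct{\theta}}$ at $\vct{\theta}_0$ by proving its upper and lower semicontinuity separately. Recall that $d(A_{\vct{\theta}},A_{\vct{\theta}_0})<\epsilon$ is equivalent to the two inclusions $A_{\vct{\theta}}\subseteq B_\epsilon(A_{\vct{\theta}_0})$ and $A_{\vct{\theta}_0}\subseteq B_\epsilon(A_{\vct{\theta}})$, so my strategy is to establish each for $|\vct{\theta}-\vct{\theta}_0|_{\bbT^N}$ sufficiently small and then take $\delta$ to be the smaller of the two thresholds.

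For $A_{\vct{\theta}}\subseteq B_\epsilon(A_{\vct{\theta}_0})$ I would argue by contradiction, using only the closedness of $I$. If no $\delta$ worked one could pick $\vct{\theta}_n\to\vct{\theta}_0$ and $\vct{m}_n\in A_{\vct{\theta}_n}$ with $d(\vct{m}_n,A_{\vct{\theta}_0})\geq\epsilon$, extract a convergent subsequence $\vct{m}_{n_k}\to\vct{m}_\star$ (invoking compactness implicit in the decomposition \eqref{eqn:dI} and reinforced by the uniform-boundedness hypothesis to be developed below), and observe that $(\vct{m}_{n_k},\vct{\theta}_{n_k})\to(\vct{m}_\star,\vct{\theta}_0)$ lies in the closed set $I$. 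Hence $\vct{m}_\star\in A_{\vct{\theta}_0}$, contradicting the choice of $\vct{m}_n$.

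For $A_{\vct{\theta}_0}\subseteq B_\epsilon(A_{\vct{\theta}})$ the dynamics must enter. Given $\vct{m}_0\in A_{\vct{\theta}_0}$, positive invariance yields $\vct{S}^t_M(\vct{m}_0,\vct{\theta}_0)\in A_{\vct{S}^t_\mathit{\Omega}(\vct{\theta}_0)}$ for every $t\geq 0$, and Lemma~\ref{lemma:dense} ensures that the torus orbit $\vct{S}^t_\mathit{\Omega}(\vct{\theta}_0)$ enters every neighbourhood of any prescribed $\vct{\theta}\in\bbT^N$. I would pick a return time $t$ with $\vct{S}^t_\mathit{\Omega}(\vct{\theta}_0)$ close to $\vct{\theta}$, deduce from the upper semicontinuity of the previous step that $A_{\vct{S}^t_\mathit{\Omega}(\vct{\theta}_0)}$ and $A_{\vct{\theta}}$ are Hausdorff-close, thereby locating a point of $A_{\vct{\theta}}$ close to $\vct{S}^t_M(\vct{m}_0,\vct{\theta}_0)$; continuous dependence of the flow on initial data should then transfer this back to a point of $A_{\vct{\theta}}$ within $\epsilon$ of $\vct{m}_0$.

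The main obstacle I anticipate is controlling the return time $t$ in this second step. Density alone allows $t$ to be arbitrarily large, and long-time evolution by $\vct{S}^t_M$ can amplify small perturbations and undermine the final transfer back to a neighbourhood of $\vct{m}_0$. Two ingredients most naturally resolve this: the uniform recurrence of the minimal linear torus flow, by which the set of return times of $\vct{S}^t_\mathit{\Omega}(\vct{\theta}_0)$ to any prescribed neighbourhood is relatively dense in $\bbR$, together with equicontinuity of $\vct{S}^t$ on the closed, bounded set $I$, which prevents unbounded stretching. With these in hand the two inclusions combine to give $d(A_{\vct{\theta}},A_{\vct{\theta}_0})<\epsilon$ whenever $|\vct{\theta}-\vct{\theta}_0|_{\bbT^N}<\delta$, as required.
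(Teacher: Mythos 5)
Your proposal takes a genuinely different, more structured route than the paper. The paper's own proof is a single contradiction argument: assume some $\epsilon_1>0$ defeats every $\delta$, pick a sequence of return times $\{t_j\}$ for which $\vct\theta_j=\vct{S}^{t_j}_{\it\Omega}(\vct\theta_0)$ re-enters a $\delta$-neighborhood of $\vct\theta_0$, and then invoke continuity of $\vct{S}^t_M$ in $t$ to argue that each $\vct{m}\in A_{\theta_0}$ is carried by $\vct{S}^{t_k}_M(\cdot,\vct\theta_0)$ to a point $\vct{m}_k\in A_{\theta_k}$ within $\epsilon_1$ of $\vct{m}$, contradicting $d(A_{\theta_k},A_{\theta_0})>\epsilon_1$. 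In effect the paper only runs what you call the lower-semicontinuity direction ($A_{\theta_0}\subseteq B_\epsilon(A_{\theta_k})$), whereas you explicitly split the Hausdorff bound into its two one-sided inclusions and give a separate argument (closedness of $I$ plus a convergent-subsequence extraction) for the upper-semicontinuity direction. Making that split explicit is a clarifying improvement, since the paper's argument does not visibly address how a point of $A_{\theta_k}$ far from $A_{\theta_0}$ is ruled out.

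However, there is a real gap in your proposal as written. In the upper-semicontinuity step you extract a convergent subsequence of $\{\vct{m}_n\}\subset M$, ``invoking compactness implicit in the decomposition \eqref{eqn:dI} and reinforced by the uniform-boundedness hypothesis to be developed below.'' That hypothesis is not available here: at the stage of Lemma~\ref{lemma:topo} the section introduces $M$ as a metric space that is \emph{not necessarily compact}, $I$ is only assumed closed, and the boundedness of $A_{\theta_0}$ is assumed for the first time in Theorem~\ref{ther:qpsets-IM}, which is a downstream consumer of this lemma, not a supplier of hypotheses. A lemma proved under that hypothesis would be narrower than what the paper states. The same concern applies to your appeal to equicontinuity of $\vct{S}^t$ on ``the closed, bounded set $I$'' in the lower-semicontinuity step: neither boundedness of $I$ nor equicontinuity of the flow over $I$ is part of the standing assumptions. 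To match the lemma as stated, the compactness and equicontinuity inputs must either be removed, derived from closedness plus continuity of the skew-product, or the lemma's hypotheses must be explicitly strengthened. On the other hand, you correctly flag a delicacy that the paper passes over silently: the return time $t_k$ may be arbitrarily large, and continuity of $\vct{S}^t_M$ in $t$ at fixed $\vct{m}$ is a pointwise statement whose $\delta$ depends on $\vct{m}$ and on the time horizon, so without some uniformity the Hausdorff bound is not immediate. Your instinct to invoke uniform recurrence of the linear torus flow (relatively dense return times) is the right ingredient there.
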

\begin{proof}
Assume not.  
Then, for each $\delta>0$ there exist a sequence of times $\{t_j\}$, where $t_j\to\infty $ as $j\to\infty$, and a positive constant $\epsilon_1$ such that $\vct{\theta}_j=\vct{S}^{t_j}_\mathit{\Omega}(\vct{\theta}_0)$ satisfies $|\vct{\theta}_j-\vct{\theta}_0|_{\mathbb{T}^N}<\delta$ for every $j>J$ ($J$ is an integer), while $d(A_{\theta_k},A_\thz)>\epsilon_1$ holds for some $k>J$. 
Here, from the continuity of $\vct{S}^t_M$ in $t$, for each $\vct{m}\in A_\thz$ and $\epsilon>0$, there exists a positive constant $\delta$ such that $|\vct{\theta}_k-\vct{\theta}_0|_{\mathbb{T}^N}<\delta$ implies $|\vct{m}_k-\vct{m}|_{M}<\epsilon$, where $\vct{m}_k:=\vct{S}^{t_k}_{M}(\vct{m},\vct{\theta}_0)\in A_{\theta_k}$. 
This gives us a contradiction of $d(A_{\theta_k},A_\thz)>\epsilon_1$ by taking $\epsilon=\epsilon_1$ and from the definition of Hausdorff distance.
\end{proof}

This lemma suggests that the invariant set $I$ is topologically connected with respect to $\vct{\theta}$. 
One trivial example is provided by taking as $I$ the closure of a single trajectory starting at a point $(\vct{m},\vct{\theta}_0)$, for which $A_\thz$ consists of a single point.   
In this case, $I$ is topologically regarded as a product set of a single point in $M$ and the torus $\mathbb{T}^N$. 
This is rigorously stated in the next theorem. 
\begin{theorem}
\label{ther:qpsets-IM}
Suppose that $A_\thz$ is bounded in $M$. 
Then, $A_\thz\times \mathbb{T}^N$ is homeomorphic to $I$.
\end{theorem}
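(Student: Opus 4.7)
My plan is to construct an explicit homeomorphism
\[
\Phi: A_\thz \times \bbT^N \to I
\]
by transporting the slice $A_\thz \times \{\vct{\theta}_0\}$ along the flow $\vct{S}^t$ and extending the resulting partial map to the full torus by continuity, exploiting the density of the base orbit (Lemma~\ref{lemma:dense}) and the Hausdorff continuity of the slices (Lemma~\ref{lemma:topo}).

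First I would observe that $I$ is compact. Since $A_\thz$ is closed by the decomposition~\eqref{eqn:dI} and bounded by hypothesis, it is compact; Lemma~\ref{lemma:topo} together with compactness of $\bbT^N$ then yields a uniform bound, producing a compact $K\subset M$ with $A_{\vct{\theta}}\subseteq K$ for every $\vct{\theta}\in\bbT^N$, so that $I$ is a closed subset of the compact space $K\times\bbT^N$. Next I would define $\Phi$ on the dense set $A_\thz\times\mO_{\bbT^N}(\vct{\theta}_0)$: for $\vct{\theta}=\vct{S}^t_\mathit{\Omega}(\vct{\theta}_0)$, rational independence of the $\itO_i$'s makes $t$ unique when $N\ge 2$, and I put
\[
\Phi(\vct{m},\vct{\theta}):=\vct{S}^t(\vct{m},\vct{\theta}_0).
\]
Positive invariance places the image in $A_{\vct{\theta}}\times\{\vct{\theta}\}\subset I$, and since $\vct{S}^t$ is a diffeomorphism, $\Phi$ is injective on each fiber of this dense set.

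The crux of the argument, and the main obstacle, is the continuous extension of $\Phi$ to all of $A_\thz\times\bbT^N$. For $\vct{\theta}\in\bbT^N$ I would take $t_n$ with $\vct{\theta}_n:=\vct{S}^{t_n}_\mathit{\Omega}(\vct{\theta}_0)\to\vct{\theta}$; compactness of $I$ provides convergent subsequences of $\vct{S}^{t_n}(\vct{m},\vct{\theta}_0)$, and Lemma~\ref{lemma:topo} places their limits in $A_{\vct{\theta}}\times\{\vct{\theta}\}$. Independence of the limit from the choice of approximating sequence reduces, on comparing two such sequences via their difference $u_n:=t_n-t'_n$, to the equicontinuity statement
\[
\vct{S}^{u_n}_\mathit{\Omega}(\vct{\theta}_0)\to\vct{\theta}_0 \;\Longrightarrow\; \vct{S}^{u_n}_M(\cdot,\vct{\theta}_0)\to\mathrm{id} \;\text{ uniformly on } A_\thz.
\]
This is precisely the pointwise assertion already invoked in the proof of Lemma~\ref{lemma:topo}, now promoted to the compact set $A_\thz$; granting it, the same control over near-returns keeps trajectories emanating from distinct $\vct{m}_1\ne\vct{m}_2\in A_\thz$ bounded apart, so that injectivity survives passage to the limit.

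Finally, with the extension in hand, $\Phi$ is continuous and injective; its image is compact and contains the dense family $\bigcup_{t\in\bbR}\vct{S}^t(A_\thz\times\{\vct{\theta}_0\})$, and therefore equals $I$. A continuous bijection from the compact space $A_\thz\times\bbT^N$ onto the Hausdorff space $I$ is automatically a homeomorphism, which completes the proof.
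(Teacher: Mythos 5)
Your overall strategy coincides with the paper's: define the map on the dense set $A_\thz\times\mO_{\bbT^N}(\vct{\theta}_0)$ by transporting along the flow, then extend by continuity to the whole torus using density of the base orbit (Lemma~\ref{lemma:dense}) and the Hausdorff-continuity of the slices (Lemma~\ref{lemma:topo}), and finally conclude bijectivity. The endgame is one place where you improve on the paper: once a continuous bijection from the compact $A_\thz\times\bbT^N$ onto $I$ is in hand, the observation that a continuous bijection from a compact space to a Hausdorff space is automatically a homeomorphism replaces the paper's unsupported assertion that the inverse of $\hat{h}_\thz$ is continuous.

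There are, however, two genuine gaps. First, compactness of $I$ cannot be taken as a starting point. You derive it by claiming that Lemma~\ref{lemma:topo} plus compactness of $\bbT^N$ produces a uniform compact $K$ with $A_\theta\subseteq K$ for all $\theta$; but Lemma~\ref{lemma:topo} only asserts Hausdorff continuity of $\theta\mapsto A_\theta$ at the single base point $\thz$, and applying its proof at an arbitrary base point $\theta_1$ would require already knowing $A_{\theta_1}$ is compact. Uniform boundedness of $I$ is precisely Corollary~\ref{thm:2-IM}, which is downstream of the theorem you are proving, so this is circular. The paper sidesteps the issue by extracting convergent subsequences in the source space $A_\thz\times\bbT^N$ (compact by hypothesis and compactness of the torus) rather than in $I$, and obtains compactness of $I$ only afterwards, as the image of a compact set. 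Second, the crux of the extension --- what you call the equicontinuity statement, ``near-return in $\bbT^N$ implies uniform near-return of $\vct{S}^{u_n}_M(\cdot,\vct{\theta}_0)$ to the identity on $A_\thz$'' --- is stated and then granted, not proven, and the claim that injectivity ``survives passage to the limit'' from ``the same control over near-returns'' is not argued: near-return control says nothing about whether two distinct forward orbits can asymptotically merge along a subsequence of arbitrary (not merely near-return) times. The paper's uniform-continuity argument, carried out by contradiction via sequence extraction in $A_\thz\times\bbT^N$, is exactly what fills the first of these holes; the injectivity step still needs its own argument (the paper gives one based on uniqueness of trajectories). As written, your proposal identifies the right skeleton but defers precisely the steps that carry the mathematical weight.
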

\begin{proof}
Now, let us construct the mapping $h_\thz: A_\thz\times \mathcal{O}_{\mathbb{T}^N}(\vct{\theta}_0)\to I$ in the following manner. 
For each $(\vct{m},\vct{\theta})\in A_\thz\times\mathcal{O}_{\mathbb{T}^N}(\vct{\theta}_0)$, by choosing time $\tau_\theta$ that satisfies $\vct{\theta}=\vct{S}^{\tau_\theta}_{\it\Omega}(\vct{\theta}_0)$, we define $h_\thz(\vct{m},\vct{\theta})$ as $(\vct{S}^{\tau_\theta}_M(\vct{m},\vct{\theta}_0),\vct{\theta})$. 

We here prove the continuity of $h_\thz$. 
For each $\epsilon>0$ and $\vct{\theta}\in\mathcal{O}_{\mathbb{T}^N}(\vct{\theta}_0)$, there exists a sequence of $\{t_j\}$ (where $t_j\to\infty$ as $j\to\infty$) such that $\vct{\theta}_j=\vct{S}^{t_j}_\mathit{\Omega}(\vct{\theta}_0)$ satisfies $\left|\vct{\theta}_j-\vct{\theta}\right|_{\mathbb{T}^N}<\epsilon/2$ for every $j>J$ ($J$ is an integer).  
Furthermore, because of the continuity of $\vct{S}^t_{M}$ in $t$, for each $\vct{m}_j\in A_\thz$ there exists a positive constant $\delta_j$ such that $\left|\vct{m}-\vct{m}_j\right|_M<\delta_j$ implies $\left|\vct{S}_M^{\tau_\theta}(\vct{m},\vct{\theta}_0)-\vct{S}_M^{\tau_{\theta_j}}(\vct{m}_j,\vct{\theta}_0)\right|_M<\epsilon/2$. 
Here, because for every $j>J$
\[
\left|(\vct{m},\vct{\theta})-(\vct{m}_j,\vct{\theta}_j)\right|_{M\times\mathbb{T}^N}
\leq \left|\vct{m}-\vct{m}_j\right|_{M}+\left|\vct{\theta}-\vct{\theta}_j\right|_{\mathbb{T}^N}
< \delta_j+\frac{\epsilon}{2},
\]
we set $\delta:=\delta_j+\epsilon/2$. 
Thus, if $\left|(\vct{m},\vct{\theta})-(\vct{m}_j,\vct{\theta}_j)\right|_{M\times\mathbb{T}^N}<\delta$, then
\begin{align}
\left|h_\thz(\vct{m},\vct{\theta})-h_\thz(\vct{m}_j,\vct{\theta}_j)\right|_{M\times\mathbb{T}^N}
&\leq \left|\vct{S}_M^{\tau_\theta}(\vct{m},\vct{\theta}_0)-\vct{S}_M^{\tau_{\theta_j}}(\vct{m}_j,\vct{\theta}_0)\right|_M
+\left|\vct{\theta}-\vct{\theta}_j\right|_{\mathbb{T}^N}
\nonumber\\
& <\frac{\epsilon}{2}+\frac{\epsilon}{2}
=\epsilon.
\nonumber
\end{align}
This shows that $h_\thz$ is continuous.   

We next prove that $h_\thz$ is uniformly continuous. 
Assume not. 
Then, for each $\delta>0$ there exists a positive constant $\epsilon_1$ such that $\left|(\vct{m}'_j,\vct{\theta}'_j)-(\vct{m}_j,\vct{\theta}_j)\right|_{M\times\mathbb{T}^N}<\delta$ implies\\
$\left|h_\thz(\vct{m}'_j,\vct{\theta}'_j)-h_\thz(\vct{m}_j,\vct{\theta}_j)\right|_{M\times\mathbb{T}^N}\geq \epsilon_1$ for some $(\vct{m}_j,\vct{\theta}_j) $ and $(\vct{m}'_j,\vct{\theta}'_j)\in A_\thz\times\mathcal{O}_{\mathbb{T}^N}(\vct{\theta}_0)$.  
We here set $\delta=1/j$. 
Since it is supposed that $A_\thz$ is bounded and closed (and $\mathcal{O}_{\mathbb{T}^N}(\vct{\theta}_0)\subset \mathbb{T}^N$), the two sequences of points $\{(\vct{m}_j,\vct{\theta}_j)\}$ and $\{(\vct{m}'_j,\vct{\theta}'_j)\}$ have convergent subsequences, denoted as $\{(\vct{m}_{j_k},\vct{\theta}_{j_k})\}$ and $\{(\vct{m}'_{j_k},\vct{\theta}'_{j_k})\}$.  
Because of $\left|(\vct{m}'_j,\vct{\theta}'_j)-(\vct{m}_j,\vct{\theta}_j)\right|_{M\times\mathbb{T}^N}<1/j$, both the subsequences have the same convergent point, denoted as $(\vct{m}^\ast,\vct{\theta}^\ast)$.  
That is, for each $\delta_1>0$, there exists a positive integer $J_k$ such that both the inequalities $\left|(\vct{m}'_{j_k},\vct{\theta}'_{j_k})-(\vct{m}^\ast,\vct{\theta}^\ast)\right|_{M\times\mathbb{T}^N}<\delta_1$ and $\left|(\vct{m}_{j_k},\vct{\theta}_{j_k})-(\vct{m}^\ast,\vct{\theta}^\ast)\right|_{M\times\mathbb{T}^N}<\delta_1$ hold for every $j_k\geq J_k$. 
Here, since $h_\thz$ is proven to be continuous at $(\vct{m}^\ast,\vct{\theta}^\ast)$, for each $\epsilon>0$, there exists a positive constant $\delta_2$ such that $\left|(\vct{m},\vct{\theta})-(\vct{m}^\ast,\vct{\theta}^\ast)\right|_{M\times\mathbb{T}^N}<\delta_2$ implies $\left|h_\thz(\vct{m},\vct{\theta})-h_\thz(\vct{m}^\ast,\vct{\theta}^\ast)\right|_{M\times\mathbb{T}^N}<\epsilon/2$. 
By taking $\epsilon=\epsilon_1$ and $\delta_1=\delta_2$ thus choosing $J_k$ appropriately, we see that both the inequalities\\
$\left|h_\thz(\vct{m}'_{j_k},\vct{\theta}'_{j_k})-h_\thz(\vct{m}^\ast,\vct{\theta}^\ast)\right|_{M\times\mathbb{T}^M}<\epsilon_1/2$ and $\left|h_\thz(\vct{m}_{j_k},\vct{\theta}_{j_k})-h_\thz(\vct{m}^\ast,\vct{\theta}^\ast)\right|_{M\times\mathbb{T}^N}<\epsilon_1/2$ hold for every $j_k\geq J_k$, and
\[
\begin{aligned}
\left|h_\thz(\vct{m}'_{j_k},\vct{\theta}'_{j_k})-h_\thz(\vct{m}_{j_k},\vct{\theta}_{j_k})\right|_{M\times\mathbb{T}^M}
\leq &
\left|h_\thz(\vct{m}'_{j_k},\vct{\theta}'_{j_k})-h_\thz(\vct{m}^\ast,\vct{\theta}^\ast)\right|_{M\times\mathbb{T}^N} \\
& + \left|h_\thz(\vct{m}^\ast,\vct{\theta}^\ast)-h_\thz(\vct{m}_{j_k},\vct{\theta}_{j_k})\right|_{M\times\mathbb{T}^N} \\
< & \frac{\epsilon_1}{2}+\frac{\epsilon_1}{2} = \epsilon_1,
\end{aligned}
\]
implying the contradiction for $\left|h_\thz(\vct{m}'_{j_k},\vct{\theta}'_{j_k})-h_\thz(\vct{m}_{j_k},\vct{\theta}_{j_k})\right|_{M\times\mathbb{T}^N}\geq \epsilon_1$. 
Thus, $h_\thz$ is uniformly continuous. 

By virtue of Theorem~3.45 in page~136 of \cite{Kubrusly:2001}, the uniformly-continuous $h_\thz: A_\thz\times\mathcal{O}_{\mathbb{T}^N}(\vct{\theta}_0)\to I$ is extended to the mapping $\hat{h}_\thz: A_\thz\times\mathbb{T}^N\to I$ that is also (uniformly) continuous. 

We now prove that $\hat{h}_\thz$ is a bijection.  
For each $\vct{\theta}\in\mathcal{O}_{\mathbb{T}^N}(\vct{\theta}_0)$, the statement is obvious from the construction of $h_\thz$ (see its dependence on $\vct{\theta}$).  
It is thus enough to check the case $\vct{\theta}\in\mathbb{T}^N\setminus \mathcal{O}_{\mathbb{T}^N}(\vct{\theta}_0)$. 
For each $\vct{\theta}\in\mathbb{T}^N\setminus \mathcal{O}_{\mathbb{T}^N}(\vct{\theta}_0)$, there exists a sequence of times $\{t_j\}$ (where $t_j\to\infty $ as $j\to\infty$) such that $\vct{\theta}_j=\vct{S}^{t_j}_\mathit{\Omega}(\vct{\theta}_0)$ converges to $\vct{\theta}$.  
Because $\hat{h}_\thz$ is continuous, the sequence $\{\hat{h}_\thz(\vct{m},\vct{\theta}_j)=h_\thz(\vct{m},\vct{\theta}_j)\}$ converges to $\hat{h}_\thz(\vct{m},\vct{\theta})$, which is represented as $\displaystyle \left(\lim_{t_j\to\infty}\vct{S}_M^{t_j}(\vct{m},\vct{\theta}_0),\vct{\theta}\right)$. 
Namely, the limit exists for every $\vct{m}\in A_\thz$. 
Assume that $\hat{h}_\thz$ is not an injection. 
Then, there exist $\vct{m},\vct{m}'\in A_\thz$ satisfying $\vct{m}\neq\vct{m}'$ such that for each $\epsilon>0$, $\left|\vct{S}_M^{t_j}(\vct{m},\vct{\theta}_0)-\vct{S}_M^{t_j}(\vct{m}',\vct{\theta}_0)\right|_M<\epsilon$ holds for every $j>J$ ($J$ is an integer and depends on $\epsilon$). 
Here, the fundamental property of uniqueness of trajectories in \eqref{eqn:org_syst} (or $\vct{S}^t$ diffeomorphsim) implies that for each $t_j$, $\left|\vct{S}_M^{t_j}(\vct{m},\vct{\theta}_0)-\vct{S}_M^{t_j}(\vct{m}',\vct{\theta}_0)\right|_M\geq\epsilon_j$ holds ($\epsilon_j$ is a positive constant and depends on $t_j$), showing the contradiction by taking $\epsilon=\epsilon_j$. 
Hence, $\hat{h}_\thz$ is an injection.  
Regarding $\hat{h}_\thz$ surjective, it is obvious that for each $\vct{\theta}\in\mathcal{O}_{\mathbb{T}^N}(\vct{\theta}_0)$, $\hat{h}_\thz(A_\thz,\vct{\theta})=h_\thz(A_\thz,\vct{\theta})=\left(\vct{S}_M^{\tau_\theta}(A_\thz,\vct{\theta}_0),\vct{\theta}\right)=(A_\theta,\vct{\theta})$. 
For each $\vct{\theta}\in\mathbb{T}^N\setminus \mathcal{O}_{\mathbb{T}^N}(\vct{\theta}_0)$ we see $\displaystyle \hat{h}_\thz(A_\thz,\vct{\theta})=\left(\lim_{t_j\to\infty}\vct{S}_M^{t_j}(A_\thz,\vct{\theta}_0),\vct{\theta}\right)$, and the limit converges to $A_\theta$ from Lemma~\ref{lemma:topo}. 
Hence, from  \eqref{eqn:dI}, $\hat{h}_\thz$ is a surjection. 

Finally, the inverse of $\hat{h}_\thz$ exists according to the above construction and is continuous. 
Therefore, $\hat{h}_\thz: A_{\theta_0}\times\mathbb{T}^N\to I$ is a homeomorphism, namely, $A_\thz\times\mathbb{T}^N$ is homeomorphic to $I$.
\end{proof}

The following two corollaries provide a way of characterization of boundedness of $I$ by means of one sample of it.  
\begin{corollary}
\label{thm:2-IM}
Suppose that $A_\thz$ is bounded in $M$.  
Then, $I$ is bounded in $X$.
\end{corollary}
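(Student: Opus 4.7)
The plan is to derive Corollary \ref{thm:2-IM} as an essentially immediate consequence of Theorem \ref{ther:qpsets-IM}: the homeomorphism $\hat{h}_{\theta_0}: A_{\theta_0}\times\mathbb{T}^N \to I$ established there will turn boundedness of $I$ in $X = M\times\mathbb{T}^N$ into a compactness question for $A_{\theta_0}\times\mathbb{T}^N$, which the hypothesis will settle.

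First I would record that each slice $A_\theta$ in the decomposition \eqref{eqn:dI} is closed in $M$: $I$ is closed in $X$ by assumption, and $A_\theta$ is the preimage of $I$ under the continuous embedding $\vct{m}\mapsto(\vct{m},\vct{\theta})$. Combining this with the hypothesis that $A_{\theta_0}$ is bounded, and invoking the Heine--Borel property of the finite-dimensional ambient space $M$ (the same property tacitly used in the proof of Theorem \ref{ther:qpsets-IM} when bounded sequences in $A_{\theta_0}\times\mathbb{T}^N$ are assumed to have convergent subsequences), I conclude that $A_{\theta_0}$ is compact. Since $\mathbb{T}^N$ is also compact, the product $A_{\theta_0}\times\mathbb{T}^N$ is compact.

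I would then apply Theorem \ref{ther:qpsets-IM} to identify $I$ with the continuous image $\hat{h}_{\theta_0}(A_{\theta_0}\times\mathbb{T}^N)$ of this compact set; since continuous images of compact sets are compact and compact subsets of a metric space are bounded, $I$ is bounded in $X$. There is no substantial obstacle here: the hard work has already been performed in Theorem \ref{ther:qpsets-IM}, and the only point requiring any care is the upgrade from boundedness to compactness of $A_{\theta_0}$, which rests on the Heine--Borel property of $M$ and is consistent with the ambient setting of the paper as well as the internal arguments of Theorem \ref{ther:qpsets-IM}.
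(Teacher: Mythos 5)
Your proposal is correct and is essentially the argument the paper has in mind, since the paper states the corollary without separate proof as an immediate consequence of Theorem~\ref{ther:qpsets-IM}. You rightly supply the one non-trivial step that the paper leaves implicit: a bare homeomorphism need not carry bounded sets to bounded sets, so you upgrade $A_\thz$ (closed because $I$ is closed, bounded by hypothesis) to a compact set via the Heine--Borel property of the finite-dimensional $M$, take the product with the compact $\bbT^N$, and conclude boundedness of $I$ from compactness of the continuous image.
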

\begin{corollary}
\label{thm:NEW}
Suppose that a closed subset $B_\thz$ of $A_\thz$ is bounded in $M$. 
Then, $\hat{h}_\thz(B_\thz\times\bbT^N)$ is a subset of $I$ and bounded in $X$.
\end{corollary}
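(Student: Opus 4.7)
The plan is to rerun the construction of Theorem~\ref{ther:qpsets-IM} with the closed bounded set $B_\thz$ playing the role of the initial section, rather than trying to invoke $\hat h_\thz$ as a map defined on all of $A_\thz\times\bbT^N$ (which is not granted here, since $A_\thz$ itself need not be bounded). Concretely, I would define the candidate map on $B_\thz\times\mathcal{O}_{\bbT^N}(\vct{\theta}_0)$ by the same formula $h_\thz(\vct{m},\vct{\theta}):=(\vct{S}_M^{\tau_\theta}(\vct{m},\vct{\theta}_0),\vct{\theta})$, then extend it by uniform continuity to $B_\thz\times\bbT^N$.

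First, I would observe that in the proof of Theorem~\ref{ther:qpsets-IM} the continuity argument for $h_\thz$ never touches boundedness of $A_\thz$, so its restriction to $B_\thz\times\mathcal{O}_{\bbT^N}(\vct{\theta}_0)$ is continuous without further work. The uniform continuity argument used the closed and bounded nature of $A_\thz$ only to extract convergent subsequences of $\{\vct{m}_j\},\{\vct{m}'_j\}\subset A_\thz$ via Bolzano--Weierstrass; since $B_\thz$ is closed and bounded in the finite-dimensional $M$, hence compact, the identical subsequence extraction applies, and we obtain uniform continuity of $h_\thz$ on $B_\thz\times\mathcal{O}_{\bbT^N}(\vct{\theta}_0)$. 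Combining this with the density of $\mathcal{O}_{\bbT^N}(\vct{\theta}_0)$ in $\bbT^N$ (Lemma~\ref{lemma:dense}) and the extension result used in Theorem~\ref{ther:qpsets-IM}, we obtain a unique continuous extension $\hat h_\thz:B_\thz\times\bbT^N\to X$ that agrees with the formula above on the dense subdomain.

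To verify $\hat h_\thz(B_\thz\times\bbT^N)\subseteq I$, for $(\vct{m},\vct{\theta})\in B_\thz\times\mathcal{O}_{\bbT^N}(\vct{\theta}_0)$ we have $(\vct{m},\vct{\theta}_0)\in A_\thz\times\{\vct{\theta}_0\}\subseteq I$, and invariance of $I$ gives $\hat h_\thz(\vct{m},\vct{\theta})=\vct{S}^{\tau_\theta}(\vct{m},\vct{\theta}_0)\in I$; for $\vct{\theta}\in\bbT^N\setminus\mathcal{O}_{\bbT^N}(\vct{\theta}_0)$, approximating $\vct{\theta}$ by the dense orbit, continuity of $\hat h_\thz$ together with closedness of $I$ places the limit in $I$. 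Finally, boundedness is immediate: $B_\thz$ closed and bounded in a finite-dimensional $M$ is compact, so $B_\thz\times\bbT^N$ is compact, and its continuous image $\hat h_\thz(B_\thz\times\bbT^N)$ is compact in $X$, hence bounded.

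The main subtlety—and the step I would be most careful about—is justifying that one may \emph{localize} the construction of Theorem~\ref{ther:qpsets-IM} to $B_\thz$ in the absence of any hypothesis on $A_\thz$; this requires a quick audit of the proof of that theorem to confirm that both its continuity and its uniform continuity arguments interact with the first factor only through closedness and boundedness, after which all the remaining ingredients (denseness of the torus orbit, closedness of $I$, compactness of $\bbT^N$) are already in hand.
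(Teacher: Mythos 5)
Your proposal is correct, and it is in fact more careful than the paper itself, which states Corollaries~\ref{thm:2-IM} and \ref{thm:NEW} without proof as direct consequences of Theorem~\ref{ther:qpsets-IM}. Your key observation is the right one: the corollary would be vacuous under the hypothesis of Theorem~\ref{ther:qpsets-IM}, because if $A_\thz$ were already bounded then every closed subset $B_\thz$ of it would automatically be bounded, so the hypothesis ``$B_\thz$ bounded'' carries information only when $A_\thz$ may be unbounded. In that case the map $\hat h_\thz$ of Theorem~\ref{ther:qpsets-IM} is not a priori available (its uniform-continuity step used Bolzano--Weierstrass on $A_\thz$), and the construction must be re-run with $B_\thz$ in the first slot, exactly as you do. Your audit of the proof is accurate: the pointwise-continuity argument never uses boundedness of the first factor; the uniform-continuity argument uses only closedness and boundedness, which $B_\thz$ supplies (compactness in the finite-dimensional $M$); the extension across the dense torus orbit, the image-containment in $I$ via positive invariance plus closedness, and boundedness via compactness of $B_\thz\times\bbT^N$ and continuity of $\hat h_\thz$ all go through unchanged. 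This is precisely the argument the paper implicitly intends, made explicit.
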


Corollaries~\ref{thm:2-IM} and \ref{thm:NEW} imply that by computing a slice of the invariant set in $M$ at one sample onset, 
it is possible to determine whether the invariant set or its subset is bounded in the augmented state space $X$.  
For a bounded invariant set $I$ in $X$, any cross-section $I_\thz=A_\thz\times \{\vct{\theta}_0\}$ 
($\vct{\theta}_0\in\bbT^N$) is bounded in terms of $M$, 
in other words, the boundedness property of $A_\thz$ does not depend on the choice of $\vct{\theta}_0$.     
In this way, we call the bounded invariant set $I$ in $X$ \emph{uniformly} bounded invariant set.

\section{Example Studies}
\label{sec:example}

In this section, we provide three examples of the application of the above theoretical results.  
The last example is related to practical problems on stability of power grids.  

\subsection{Forced Linear Harmonic Oscillator}
\label{subsec:ex1}

First, we will consider the following one-degree-of-freedom linear harmonic oscillator with quasi-periodic forcing of (multiple) $N$ frequencies:
\[
\frac{dm_1}{dt}=m_2, \qquad
\frac{dm_2}{dt}=-m_1
+\sum^{N}_{i=1}F_i\sin(\mathit{\Omega}_it+\theta_{i0}),
\]
or
\begin{equation}
\frac{dm_1}{dt}=m_2, \qquad
\frac{dm_2}{dt}=-m_1+\sum^{N}_{i=1}F_i\sin\theta_i, \qquad
\frac{d\theta_i}{dt}=\mathit{\Omega}_i \qquad i=1,2,\ldots,N
\label{eqn:ex1}
\end{equation}
where $F_i$ ($>0$) ($i=1,2,\ldots,N$) are the amplitudes of periodic forces, $\mathit{\Omega}_i$ ($>0$) the angular frequencies of the forces, and $\theta_{i0}\in\mathbb{T}$ the initial phases.  
We assume there is no resonance: the $N$ angular frequencies $\mathit{\Omega}_i$ are rationally independent; and $\mathit{\Omega}_i\neq 1$ ($i=1,2,\ldots,N$).  
The solution of \eqref{eqn:ex1} with initial condition $(m_{10},m_{20},\theta_{10},\theta_{20},\ldots,\theta_{N0})$ is analytically represented as follows:
\[
\left.
\begin{aligned}
m_1(t) &= 
C_1\cos t+C_2\sin t
+\sum^{N}_{i=1}\frac{F_i}{1-\mathit{\Omega}^2_i}\cos(\mathit{\Omega}_it+\theta_{i0}), \\
m_2(t) &=
\displaystyle \frac{{d}}{{d} t}m_1(t), \\\noalign{\vskip 5mm}
\theta_i(t) &= \mathit{\Omega}_it+\theta_{i0} \qquad i=1,2,\ldots,N,
\end{aligned}
\right\}
\]
with
\begin{equation}
C_1:=
m_{10}-\sum^{N}_{i=1}\frac{F_i}{1-\mathit{\Omega}^2_i}\cos\theta_{i0}, \qquad
C_2:= 
m_{20}+\sum^{N}_{i=1}\frac{F_i\mathit{\Omega}_i}{1-\mathit{\Omega}^2_i}\sin\theta_{i0}.
\label{eqn:ex1_init}
\end{equation}

Now, we estimate the time-averages of an observable under the dynamics described by the linear system \eqref{eqn:ex1}.  
We define the following quadratic observable $f: \bbR^2\times \bbT^N\to\bbR$, related to the potential energy $m^2_1/2$:
\[
f(m_1,m_2,\theta_1,\theta_2,\ldots,\theta_N)=m^2_1.
\]
Then, we calculate its time-average under the solution as
\begin{equation}
\lim_{T\to\infty}\frac{1}{T}\int^T_0(U^tf)(m_{10},m_{20},\theta_{10},\theta_{20},\ldots,\theta_{N0}){d} t
=\frac{1}{2}
\left\{C^2_1+C^2_2+\sum^{N}_{i=1}\frac{F^2_i}{(1-\mathit{\Omega}^2_i)^2}\right\}.
\end{equation}
Here, as shown in \eqref{eqn:ex1_init}, the constants $C_1$ and $C_2$ are determined by the initial condition $(m_{10},m_{20},\theta_{10},\theta_{20},\ldots,\theta_{N0})$.  
The level set, parameterized by a real-valued constant $c\in\mathbb{R}$, of the time-average in the two-dimensional initial plane $(m_{10},m_{20})$ at fixed $(\theta_{10},\theta_{20},\ldots,\theta_{N0})$ becomes a circle with center of
\[
\left(\sum^{N}_{i=1}\frac{F_i}{1-\mathit{\Omega}^2_i}\cos\theta_{i0},-\sum^{N}_{i=1}\frac{F_i\mathit{\Omega}_i}{1-\mathit{\Omega}^2_i}\sin\theta_{i0}\right),
\]
if the following inequality holds:
\[
r^2:=2c-\sum^{N}_{i=1}\frac{F^2_i}{(1-\mathit{\Omega}^2_i)^2}>0
\]
Thus, the constant $r$ corresponds to the radius of the circle of the level set for $c$.  
The center of the level set seems to stay close to the origin and is shifted with the choice of initial phases $(\theta_{10},\theta_{20},\ldots,\theta_{N0})$.
Numerical results of the level sets for $N=2$, $\mathit{\Omega}_1=\pi/3$, $\mathit{\Omega}_2=11/10$, and $F_1=F_2=0.2$ are presented in Figure~\ref{fig:0}. 

\begin{figure}[t]
\centering
\includegraphics[width=0.6\textwidth]{./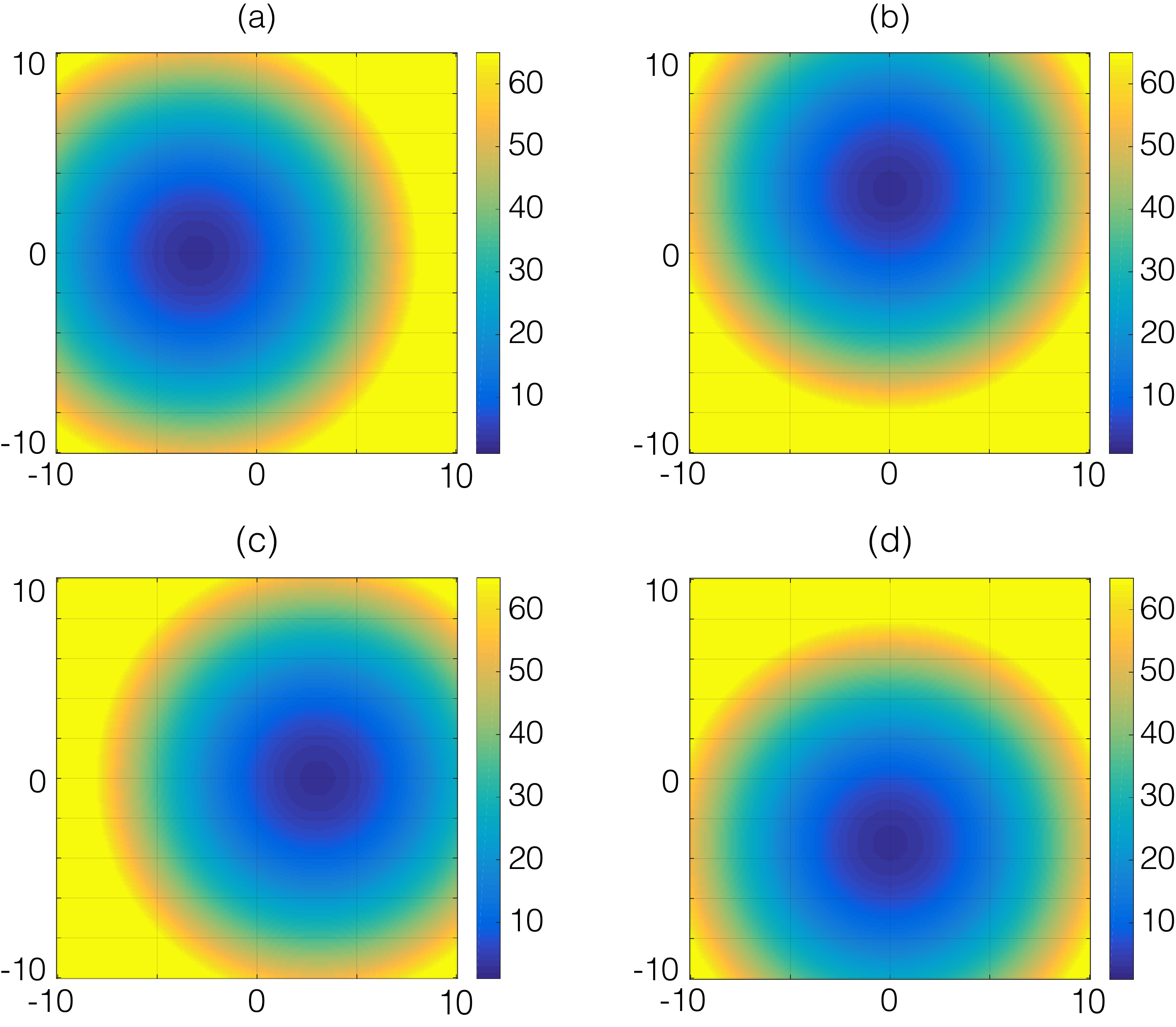}
\caption{%
Level sets of the quasiperiodically-forced one-degree-of-freedom harmonic oscillator \eqref{eqn:ex1} for different $(\theta_{10},\theta_{20})$: 
(a) $(0,0)$, (b) $(\pi/2,\pi/2)$, (c) $(\pi,\pi)$, and (d) $(3\pi/2,3\pi/2)$.  
The horizontal (or vertical) axis for each figure is $x_1\in[-10,10]$ (or $x_2\in[-10,10]$).
}%
\label{fig:0}
\end{figure}

\subsection{One-Dimensional Dissipative System}
\label{subsec:ex2}

Next, we consider the following linear dissipative system with quasi-periodic forcing:
\[
\frac{{d} m}{{d} t}=-\lambda m
+\sum^{N}_{i=1}F_i\sin(\mathit{\Omega}_it+\theta_{i0}).
\]
or
\begin{equation}
\frac{dm}{dt}=-\lambda m+\sum^{N}_{i=1}F_i\sin\theta_i, \qquad 
\frac{d\theta_i}{dt}=\mathit{\Omega}_i \qquad i=1,2,\ldots,N
\label{eqn:ex2}
\end{equation}
where $\lambda>0$.  
In the same manner as above, we suppose the $N$ angular frequencies $\mathit{\Omega}_i$ are rationally independent.  
The solution of \eqref{eqn:ex2} with initial condition $(m_{0},\theta_{10},\theta_{20},\ldots,\theta_{N0})$ is analytically represented as follows:
\[
\left.
\begin{array}{ccl}
m(t) &=& \displaystyle C\ee^{-\lambda t}
+\sum^{N}_{i=1}\frac{F_i}{\lambda^2+\mathit{\Omega}^2_i}
\{\lambda\sin(\mathit{\Omega}_it+\theta_{i0})
-\mathit{\Omega}_i\cos(\mathit{\Omega}_it+\theta_{i0})\},
\\\noalign{\vskip 4mm}
\theta_i(t) &=& \mathit{\Omega}_it+\theta_{i0}~~~(i=1,2,\ldots,N),
\end{array}
\right\}
\]
with
\[
C:=m_0
-\sum^{N}_{i=1}\frac{F_i}{\lambda^2+\mathit{\Omega}^2_i}
\{\lambda\sin\theta_{i0}-\mathit{\Omega}_i\cos\theta_{i0}\}.
\]

Here, we estimate the time-averages of an observable under the dynamics described by the linear system \eqref{eqn:ex2}.  
We consider the quadratic observable as
\[
f(m,\theta_1,\theta_2,\ldots,\theta_N)=m^2.
\]
Then, its time-average under the solution is directly calculated as follows:
\[
\lim_{T\to\infty}\frac{1}{T}\int^T_0(U^tf)(m_0,\theta_{10},\theta_{20},\ldots,\theta_{N0}){d} t
=\frac{1}{2}
\sum^{N}_{i=1}\frac{F^2_i}{\lambda^2+\mathit{\Omega}^2_i}.
\]
Although this is trivial, since the transient term in the solution is filtered out, the value of the time-average is constant in the augmented state space $\mathbb{R}\times\mathbb{T}^N$, in other words, does not depend on the initial condition $(m_0,\theta_{10},\theta_{20},\ldots,\theta_{N0})$.  
This implies that for any choice of sample at $(\theta_{10},\theta_{20},\ldots,\theta_{N0})^\top\in\mathbb{T}^N$, the partition of $M=\mathbb{R}$ based on the level set of the time-averages is just one and corresponds to $\mathbb{R}$ itself.  

Also, the above observation on the partition holds when we pick up a polynomial observable like
\[
f(m,\theta_1,\theta_2,\ldots,\theta_N)=\sum^{n}_{j=0}a_jm^j,
\]
with coefficients $a_j\in\mathbb{R}$ and finite integer $n$.  
For every continuous function defined on a closed interval in $\mathbb{R}$, from the Weierstrass Approximation Theorem, it can be uniformly approximated on that interval by polynomials to any degree of accuracy.  
This implies that the above observation of  the partition holds for every continuous function, which is an example of Theorem~\ref{thm:basin1}.  
Thus, the augmented state space $\mathbb{R}\times\mathbb{T}^N$ is the basin of attraction for the system \eqref{eqn:ex2}, that is to say, the torus $\bbT^N$ is the unique attractor for the system.

\subsection{Two-Dimensional Nonlinear Model of a Power Grid}
\label{sec:appl}

To show the effectiveness of the theory beyond the basic examples, we apply the developed theory for analyzing 
the CSI phenomenon in a loop power grid.  
CSI is a undesirable and emergent phenomenon of synchronous machines in a power grid, in which most of the machines coherently lose synchronism with the rest of the grid after being subjected to a finite disturbance \cite{
Susuki_JNLS09}. 
In the case of small dissipation, this phenomenon generally does not happen upon an infinitesimally small perturbation around a steady operating state (stable equilibrium).\footnote{If we have no dissipation, nonlinear stability properties are not understood in the high-dimensional cases.}
However, it encompasses the situation when the grid's operating state escapes a predefined, positive-measure set around the equilibrium.  
In this way, the notion of instability that we address here is non-local.  
In \cite{Susuki_JNLS09}, we derived a reduced-order dynamical system that described averaged dynamics of machines in a simple loop power grid and explained the non-local instability.  
The reduced-order system has an quasiperiodic forcing (so it is non-autonomous), and its solutions define a measure-preserving flow.  
The goal of this section is to characterize the non-local instability by analyzing invariant sets of the quasiperiodically forced system,  which is introduced in \eqref{eqn:swing_eq} below.    

\subsubsection{Mathematical Model}

\begin{figure}[t]
\centering
\includegraphics[width=.6\textwidth]{./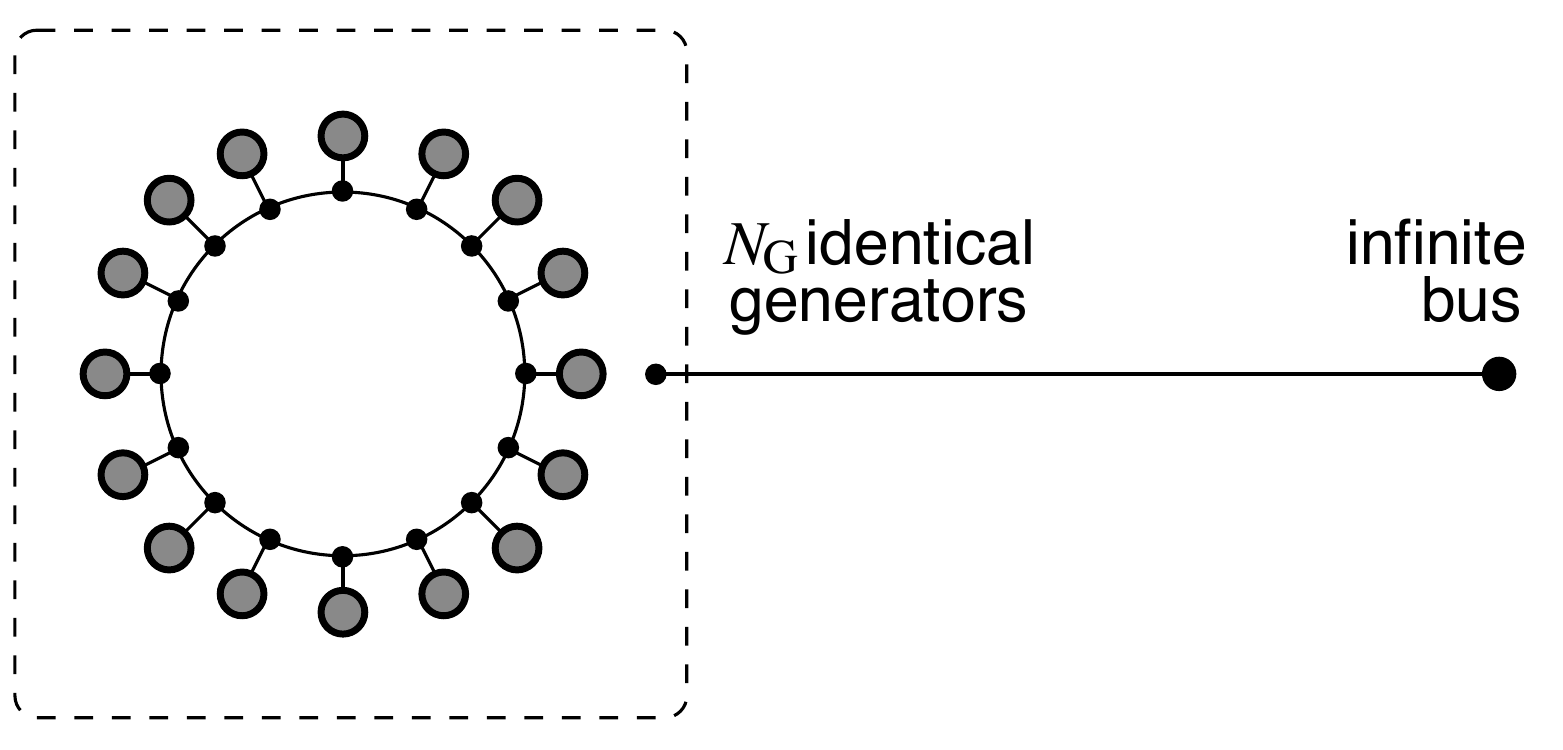}
\caption{%
Rudimentary power grid with the loop topology, named the loop power grid \cite{Susuki_JNLS09}
}%
\label{fig:loop}
\end{figure}

Consider short-term (zero to ten seconds) swing dynamics of a rudimentary power grid 
with the loop topology shown in Figure~\ref{fig:loop}, where the small \emph{gray} circles denote synchronous generators.  
The loop part of the grid consists of $N\sub{G}$ small, identical generators, encompassed by the \emph{dotted} box, which operate in the grid and are connected to the infinite bus\footnote{An ideal voltage source of constant voltage and constant frequency}.  
The loss-less transmission lines joining the infinite bus and a generator are much longer than those joining two generators in the loop part.  
Thus, the magnitude of interaction between the infinite bus and a generator is much smaller than that between two neighboring generators on the loop part.  
We call the model of Figure~\ref{fig:loop} the loop power grid in the following.  

Here, we assume that the lengths of transmission lines between two neighboring generators are identical.  
In \cite{Susuki_JNLS09}, we showed that the CSI phenomenon in the loop power grid was accurately captured by the following dynamical system defined on the cylindrical state space $\bbT^1\times\bbR$:
\begin{equation}
\frac{d\delta}{dt}=
\omega, \qquad
\frac{d\omega}{dt}=
p\sub{m}-\frac{b}{N\sub{G}}\sum_{i=1}^{N\sub{G}}
\sin\left(\sum_{j\in\mathcal{J}}e_{ij}c_j\cos\itO_jt+\delta\right)
\label{eqn:swing_eq}
\end{equation}
where
\[
e_{ij}=
\sqrt{\frac{2}{N\sub{G}}}
\cos\left(\frac{2\pi ij}{N\sub{G}}+\frac{\pi}{4}\right),
\qquad
\itO_j=
2\sqrt{|b\sub{int}|}\left|\sin\frac{\pi j}{N\sub{G}}\right|.
\]
The system \eqref{eqn:swing_eq} represents spatially-averaged dynamics of the $N\sub{G}$ generators in the loop power grid.  
The variable $\delta\in \bbT^1$ is the average of angular positions of rotors (with respect to the infinite bus) of the $N\sub{G}$ generators, and $\omega\in\bbR$ is the average of deviations of rotor speeds in the $N\sub{G}$ generators relative to the system angular frequency.  
The parameter $p\sub{m}$ stands for the mechanical input power to a generator, $b$ for the maximum transmission power between the infinite bus and a generator, and $b\sub{int}$ for the maximum transmission power between two neighboring generators in the loop power grid.  
The constants $e_{ij}$ are the eigenfunctions of linear modal oscillations between coupled generators in the loop part, $\itO_j$ their eigen-(angular) frequency, and $c_j$ the strengths of modal oscillations.   
The finite index set $\mathcal{J}$ determines which modes are excited in the loop part. 
The system \eqref{eqn:swing_eq} is derived under the observation that the linear modal oscillations in the loop part act as perturbations on the spatially-averaged dynamics of the $N\sub{G}$ generators: see \cite{Susuki_JNLS09} and references therein.    
Note that \eqref{eqn:swing_eq} is the Hamiltonian system:
\begin{equation}
\frac{d\delta}{dt}=\frac{\DD}{\DD\omega}{H}(\delta,\omega,t), \qquad
\frac{d\omega}{dt}=-\frac{\DD}{\DD\delta}{H}(\delta,\omega,t),
\label{eqn:time-H}
\end{equation}
with the time-dependent Hamiltonian function ${H}(\delta,\omega,t)$, given by
\[
{H}(\delta,\omega,t):=
\frac{1}{2}\omega^2-p\sub{m}\delta-\frac{b}{N\sub{G}}\sum^{N\sub{G}}_{i=1}\cos\left(\sum_{j\in\mathcal{J}}e_{ij}c_j\cos\itO_jt+\delta\right).
\]
Because the flow defined here is divergence-free, i.e. $(\DD{H}/\DD\delta)({d}\delta/{d} t)+(\DD{H}/\DD\omega)({d}\omega/{d} t)=0$, the system \eqref{eqn:swing_eq} preserves the Liouville measure ${d}\delta{d}\omega$.  
Note that it does not conserve the Hamiltonian function ${H}$, because of ${d}{H}/{d} t\neq 0$ if $c_j\neq 0$. 
In this way, the augmented system for \eqref{eqn:swing_eq}, namely
\begin{equation}
\frac{d\delta}{dt}=
\omega, \quad
\frac{d\omega}{dt}=
p\sub{m}-\frac{b}{N\sub{G}}\sum_{i=1}^{N\sub{G}}
\sin\left(\sum_{j\in\mathcal{J}}e_{ij}c_j\cos\theta_j+\delta\right), \quad
\frac{d\theta_j}{dt}=\itO_j \qquad j\in\mathcal{J},
\label{eqn:swing_eq-t}
\end{equation}
defines a measure-preserving flow on $M\times\bbT^{|{\cal J}|}$ with $M=\bbT^1\times\bbR$, where $|{\cal J}|$ stands for the cardinality of ${\cal J}$. 

\subsubsection{Results and Implications}
\label{subsec:vis}
 
It was shown in \cite{Susuki_JNLS09} that the unbounded motion in the quasiperiodically forced system \eqref{eqn:swing_eq} corresponds to the CSI phenomenon.  
Because of $\delta\in\bbT^1$, the unbounded motion involves the unbounded trajectory in the $\omega$-direction, that is, the average of deviation of rotor speeds.  
We use Corollaries~\ref{thm:2-IM} and \ref{thm:NEW} to analyze invariant sets of the flow (defined by the system \eqref{eqn:swing_eq-t}), in which all the generators show \emph{bounded} deviation of rotor speeds in time.  
The analysis is crucial to understanding the so-called stability region of the loop power grid, i.e. how the grid's behavior depends on initial conditions representing failures in the grid.  

Numerical simulations are performed for analysis of invariant sets.  
To do so, we need to fix (i) a function $f$, (ii) the subset of state space on which we identify invariant sets, 
and (iii) the exit time $T\sub{ex}$ to obtain an approximation of each time-average $f^\ast$.   
We use the function $f(\delta)=\sin 2\delta$ and the grid of $401\times 401$ of initial conditions $(\delta,\omega)$ on $[1,2]\times [-0.15,0.15]$. 
The averaging operation of a single function can be used for the identification of invariant sets.  
Numerical integration of the system \eqref{eqn:swing_eq} is performed with the 4th-order symplectic integrator \cite{Yoshida_PLA150} with time step $h$:  see Appendix~\ref{appsec:3} for details.  
The parameter settings are the following:
\begin{equation}
p\sub{m}=0.95, \quad b=1, \quad N\sub{G}=20, \quad
b\sub{int}=100, \quad h=\frac{2\pi}{\itO_1}\frac{1}{N}, 
\quad T\sub{ex}=\frac{2\pi}{\itO_1}\times 2000,
\end{equation}
where $N=8$ or $16$ depending on the setting of $\mathcal{J}$.  
The values of $p\sub{m}$, $b$, $N\sub{G}$, and $ b\sub{int}$ are the same as in \cite{Susuki_JNLS09}.  

\begin{figure}[t]
\centering
\includegraphics[width=\textwidth]{./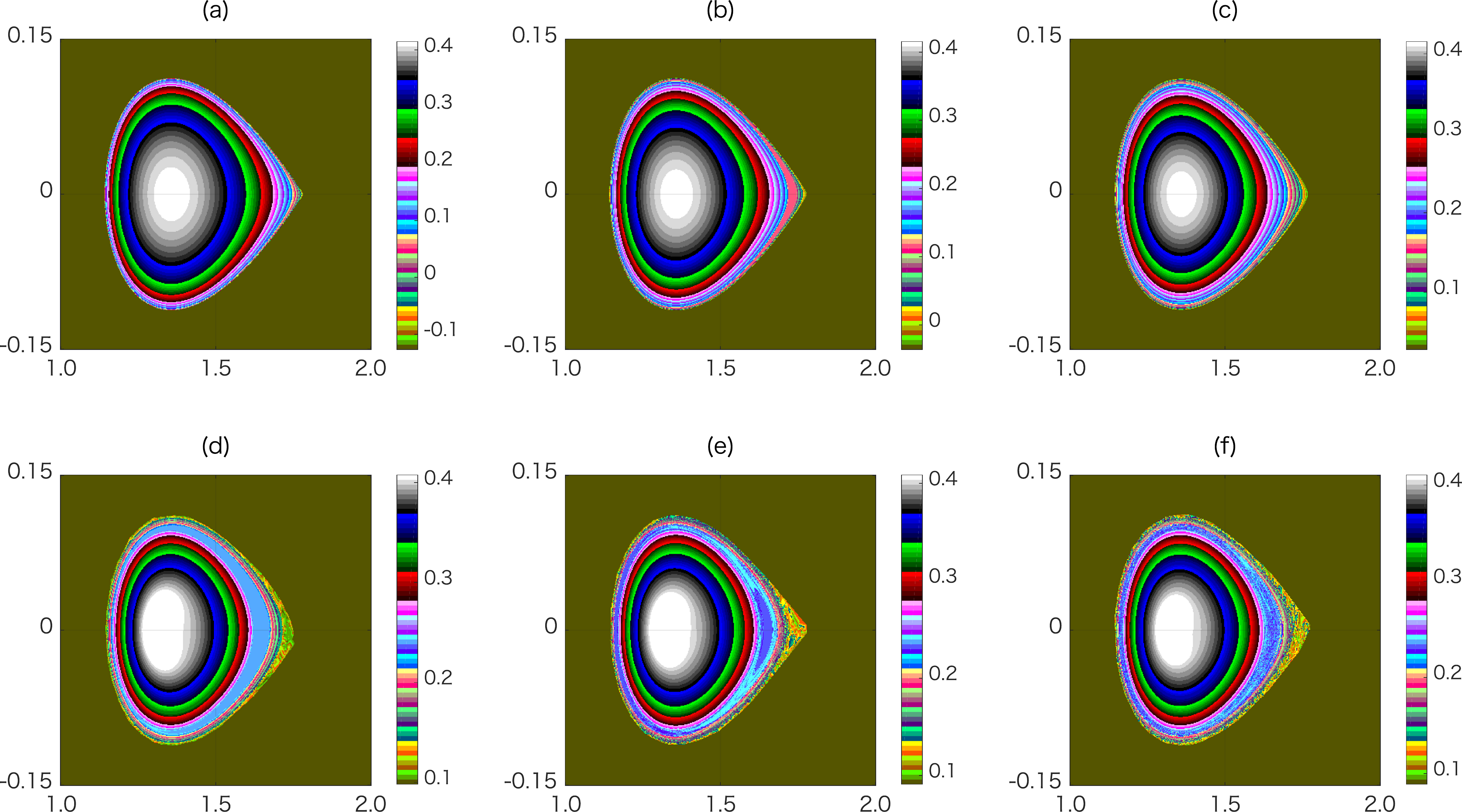}
\centering
\caption{%
Analysis of invariant sets of the measure-preserving flow defined by \eqref{eqn:swing_eq}--I: Initial phase $\vct{\theta}_0=\vct{0}$ and multiple setting of $\mathcal{J}$ i.e. (a) $\mathcal{J}=\{1\}$, (b) $\mathcal{J}=\{1,2\}$, (c) $\mathcal{J}=\{1,2,3\}$, (d) $\mathcal{J}=\{1,2,3,4\}$, (e) $\mathcal{J}=\{1,2,3,4,5\}$, and (f) $\mathcal{J}=\{1,2,3,4,5,6\}$.  
The horizontal axis for each figure is $\delta\in[1,2]$, and the vertical axis is $\omega\in[-0.15,0.15]$.  
For the outer dark-green regions, every trajectory starting from them is unbounded in time.  
}%
\label{fig:1}
\end{figure}

\begin{figure}[t]
\centering
\includegraphics[width=\textwidth]{./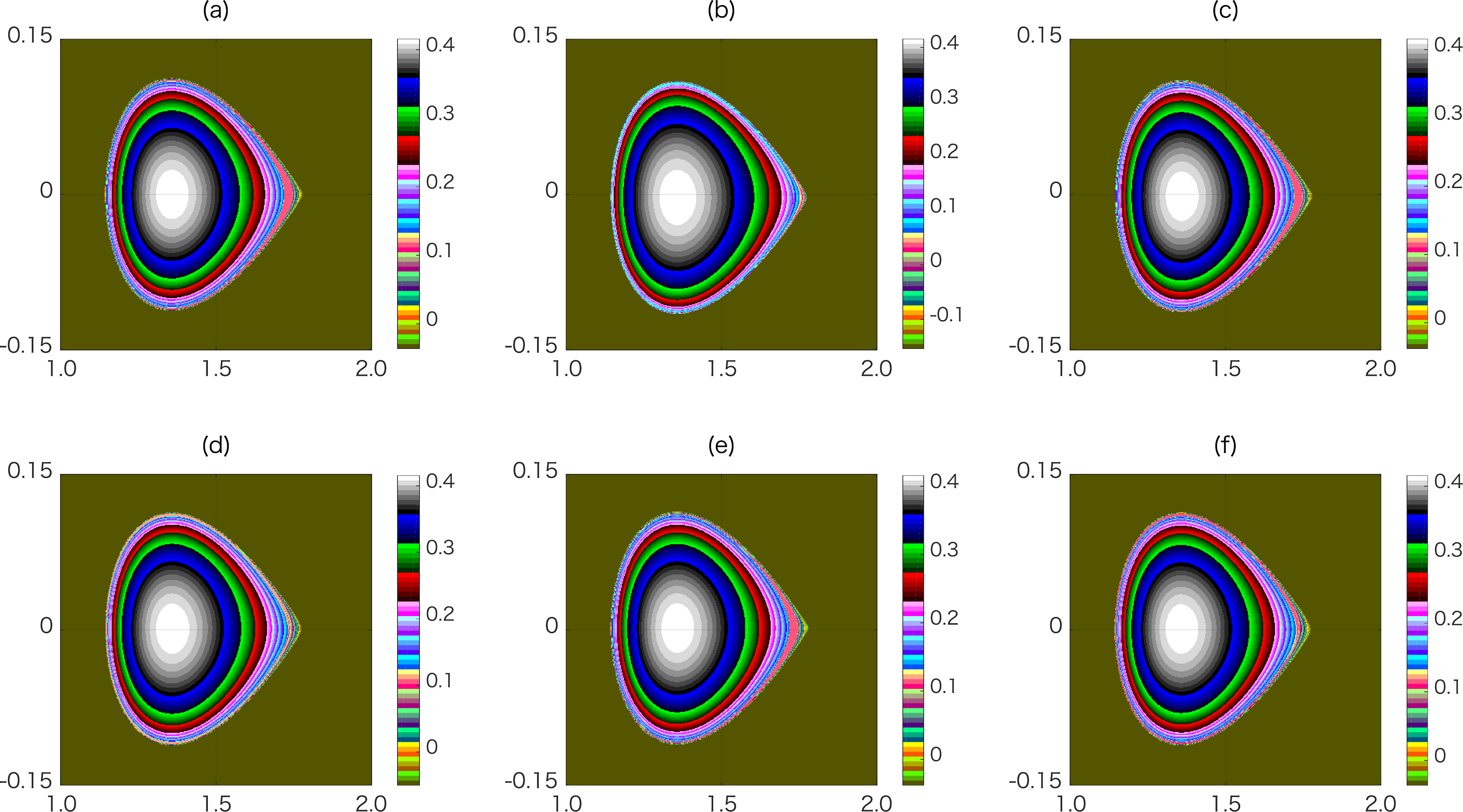}
\caption{%
Analysis of invariant sets of the measure-preserving flow defined by \eqref{eqn:swing_eq}--II: $\mathcal{J}=\{1,2\}$ and multiple setting of initial phase $\vct{\theta}_0=(\theta_{10},\theta_{20})^\top=(2\pi k\mathit{\Omega}_1/\mathit{\Omega}_2,0)^\top$ for (a) $k=0$ (same as Figure~\ref{fig:1}(b)), (b) $k=1$, (c) $k=2$, (d) $k=3$, (e) $k=4$, and (f) $k=5$.  
The horizontal axis for each figure is $\delta\in[0,\pi]$, and the vertical axis is $\omega\in[-0.15,0.15]$.  
For the outer dark-green regions, every trajectory starting from them is unbounded in time.  
}%
\label{fig:2}
\end{figure}

Figure~\ref{fig:1} shows numerical results on analysis of invariant sets of the flow defined by the quasiperiodically forced system \eqref{eqn:swing_eq}.   
In this figure we change the number of excitation modes, i.e. $\mathcal{J}$; (a) $\mathcal{J}=\{1\}$, (b) $\mathcal{J}=\{1,2\}$, (c) $\mathcal{J}=\{1,2,3\}$, (d) $\mathcal{J}=\{1,2,3,4\}$, (e) $\mathcal{J}=\{1,2,3,4,5\}$, and (f) $\mathcal{J}=\{1,2,3,4,5,6\}$.  
The amplitude $c_j$ in the figure is common and satisfies $\displaystyle \sqrt{\sum_{j\in\mathcal{J}}c_j^2}=1.5$, implying that the root-means-square of the forcing term does not change for any setting of $\mathcal{J}$. 
All the initial phases $\vct{\theta}_0$ are set to zero.
For the outer \emph{dark-green} region in each figure, every trajectory starting from it is unbounded in time.  
Except for the \emph{dark-green} on the bottom, the color bar attached to each figure denotes the value of time-average $f^\ast(\delta)$.  
The level sets of $f^\ast(\delta)$ are colored by the \emph{same} color.   
That is, the set of the \emph{same} color belongs to one invariant set.  
By Corollary~\ref{thm:NEW}, the fact that the level set is bounded in these figures implies that the associated subset of invariant set is bounded in the augmented state space $M\times\bbT^{|\mathcal{J}|}$.  
In the figures (a,b,c), we see that the color plot of the level sets forms concentric rings.  
However, in the figures (d,e,f), we see that the color plot does does not necessarily exhibit concentric rings and does become \emph{scattered} in the bands close to the outer \emph{dark-green} regions.  
This implies that the structure of invariant sets is complicated in the bands.  
We anticipate this results from the so-called resonance phenomenon (see, e.g., \cite{Greenspan_SIAMJMA15,Ueda_IJBC8}) as an interaction between a family of bounded oscillations in the unforced system and quasiperiodic forcing.

Figure~\ref{fig:2} shows other numerical results on analysis of invariant sets.  
In this figure, we consider the forcing term with two frequencies, $\mathcal{J}=\{1,2\}$, and we change the initial phases $\vct{\theta}_0=(\theta_{10},\theta_{20})^\top=(2\pi k\mathit{\Omega}_1/\mathit{\Omega}_2,0)$ where $k=0,1,\ldots,5$.  
The color plots here are conducted in the same manner as in Figure~\ref{fig:1}, and for the outer \emph{dark-green} regions every trajectory starting from them is unbounded in time.  
By Corollary~\ref{thm:NEW}, the fact that the level set with same color is bounded in these figures implies that the associated subset of invariant set is bounded in the augmented state space.  
Here, under the current setting of parameters, it is conjectured that outside the outer \emph{dark-green} regions (i.e., outside the computational domain of the analysis), there exists no state from which trajectory is bounded in  time. 
This is true in the unperturbed case because there exists one homoclinic orbit separating the bounded and unbounded trajectories inside the computational domain. 
Thus, it can be inferred that the level sets discussed above are bounded in $M=\mathbb{T}^1\times\mathbb{R}$, implying by Corollary~\ref{thm:2-IM} that the whole of the corresponding invariant sets are \emph{uniformly} bounded in the augmented state space. 
The intersection of uniformly bounded invariant sets for all $\vct{\theta}$ corresponds to the stability region of the loop power grid, in which all the generators show bounded deviation of rotor speeds in time.

\section{Conclusions}
\label{sec:outro}

In this paper, we studied the ergodic partition and invariant sets of the quasiperiodically forced dynamical system \eqref{eqn:org_syst}.  
The main theoretical contributions of this paper are twofold.  
One is to provide a theory of ergodic partition of state space for smooth flows.  
The theory is a natural extension of that in \cite{Mezic_CHAOS9} and is applicable to measure-preserving and dissipative flows arising in various physical and engineering systems.  
Examples of them include dynamical systems induced by time-dependent Hamiltonians and incompressible fluid flows with time-dependent velocity profiles.  
The other is to provide a new characterization of invariant sets in the the quasiperiodically forced system \eqref{eqn:org_syst}, in which we introduced a concept of uniformly bounded invariant sets.  
The developed theory was applied to characterize the CSI phenomenon of a rudimentary power grid.  
We have speculated that the phenomenon can be characterized, in particular, the stability region corresponds to the intersection of uniformly bounded sets for all initial phases; or a sufficient condition for the phenomenon is that the operating state of the grid is placed outside of the bounded sets at a particular initial phase or time like $t=0$.

\section*{Acknowledgments}

Y.S. thanks Dr.~Marko $\rm Budi\check{s}i\acute{c}$ for his introduction to theory and computation of ergodic partition and fruitful discussions.  
The authors also appreciate the reviewers for their valuable suggestion of the manuscript. 
During part of the work on this paper, Y.S. was at the Department of Mechanical Engineering, University of California, Santa Barbara, United States, and at the Department of Electrical Engineering, Kyoto University, Japan.

\appendix

\section{Proof of Lemma~\ref{lemma:1}}
\label{appsec:1}

A continuous function $f$ on $X$ is measurable and, from the assumption that $X$ is compact for the theoretical analysis in Section~\ref{sec:ergodic}, $f$ is bounded on $X$.  
Here, we note that the time-average $f^\ast$ of the measurable function $f$ is measurable as a limit of measurable functions $f_T$ on $X$, defined as
\[
f_T(x):=\frac{1}{T}\int^T_0 f(\vct{S}^t(x)){d}t \qquad T>0.
\] 
Since we consider the sets $A_\alpha$ on $\itS\subset X$, the fact that the family of $A_\alpha$ is a partition of $\itS$ is obvious.  
Next, the fact that the partition, denoted by $\zeta_f$, is measurable follows by taking $\mathfrak{D}_f$ to be the collection of pre-images under $f^\ast$ of open intervals with rational endpoints in $\bbR$.  
Because $f^\ast$ is measurable, each pre-image $(f^\ast)^{-1}([a,b])$, where $a$ and $b$ are rational numbers, is measurable.  
Every set of this type is clearly separated into sets of the form $(f^\ast)^{-1}(\{c\})$, $c\in\bbR$. 
This implies that every element of $\mathfrak{D}_f$ is a union of elements of $\zeta_f$.  
Furthermore, because the set of all rational numbers is dense in $\bbR$, for any pair $\alpha,\beta\in\bbR$ satisfying $\alpha<\beta$, there exist two rational numbers $\underline{a},\overline{a}$ such that $\alpha<\underline{a}<\beta<\overline{a}$.   
The pre-image $(f^\ast)^{-1}([\underline{a},\overline{a}])$ is an element of $\mathfrak{D}_f$ which we denote by $D$.  
Obviously, we see $A_\alpha\subset {D}^{\rm c}$ and $A_\beta\subset {D}$.  
Thus, it follows that $\mathfrak{D}_f$ is a basis for $\zeta_f$, and we conclude that $\zeta_f$ is measurable.

\section{Proof of Theorem~\ref{thm:1}}
\label{appsec:2}

Let $A$ be an element of $\zeta\sub{e}$.  
For a.e. point $x\in A$, the time-average $f^\ast(x)$ exists for all $f\in \mathcal{C}(X)$. 
Thus, the following linear functional $L_{A}$ on $\mathcal{C}(X)$ is well-defined:
\begin{equation}
L_{A}(f):=\lim_{T\rightarrow\infty}\frac{1}{T}\int^T_0 f(\vct{S}^t (x)){d} t \qquad x\in A.
\label{eqn:L_{A}}
\end{equation}
Then, because $L_{A}$ is a positive linear functional and $L_{A}(1)=1$, by Riesz's Representation Theorem (I.8.4 in \cite{Mane:1987}) there exists a unique probability measure $\mu_{A}$ on $X$ such that 
\begin{equation}
\int_{X} f{d}\mu_A=L_{A}(f), 
\label{eqn:Riesz}
\end{equation}
for all $f\in \mathcal{C}(X)$.  
Note that $\mu_{A}$ is invariant for $\vct{S}^t$.  
To prove this, for all $t\in\bbR$ we have
\[
\int_{X} f\circ\vct{S}^t\,{d}\mu_{A}
=L_{A}(f\circ\vct{S}^t)
=L_{A}(f)
=\int_{X} f{d}\mu_{A}.
\]
The second equality is a consequence of \eqref{eqn:L_{A}}.  
For the above operation, the continuity of $\vct{S}^t$ is required.    
Because $\mathcal{C}(X)$ is dense in $\mathcal{L}^1_{\mu_A}(X)$, $\mu_{A}$ is invariant.  

Now, we prove that $\mu_{A}$ is a probability measure on $A$.  
There is a sequence of compact sets $A^{\rm c}_n$, subsets of $A^{\rm c}$, 
such that
\begin{equation}
{A}^{\rm c}_1
\subset
\cdots
\subset
{A}^{\rm c}_{n}
\subset 
{A}^{\rm c}_{n+1}
\subset
\cdots, \qquad
\mu_{A}\left(A^{\rm c}\setminus\bigcup_{n\geq 1}A^{\rm c}_n\right)=0. \qquad
\label{eqn:hoge}
\end{equation}
Here, we can show $\mu_{A}(A^{\rm c}_n)=0$ for every $A^{\rm c}_n$.  
To do this, note that by Urysohn's Lemma, for every $A^{\rm c}_n$, there is a continuous, positive function $f_n$ on $X$ that is equal (i) to one on $A^{\rm c}_n$ and (ii) to zero outside of $A^{\rm c}_{n+1}$.  
Clearly, we see $f_n=0$ on $A$.  
Therefore, because of $\int_{X}f_n{d}\mu_{A}=\int_{X\setminus A^{\rm c}_{n+1}}f_n{d}\mu_{A}+\int_{A^{\rm c}_{n+1}\setminus A^{\rm c}_n}f_n{d}\mu_{A}+\int_{A^{\rm c}_n}f_n{d}\mu_{A}$ and 
the positiveness of $f_n$, 
we have 
\[
0
\leq\mu_{A}(A^{\rm c}_n)
\leq\int_{X} f_n{d}\mu_{A}
=L_{A}(f_n)
=0.
\]
The measure of a union of the countable number of sets with measure zero is zero:
\begin{equation}
\mu_{A}\left(\bigcup_{n\geq 1}A^{\rm c}_n\right)=0.
\label{eqn:4}
\end{equation}
Therefore, by \eqref{eqn:hoge} and \eqref{eqn:4}, we have $\mu_{A}(A^{\rm c})=0$.  
It follows from $\mu_{A}(X)=1$ that $\mu_{A}$ is a probability measure on $A$.

Next, let us prove that $\mu_{A}$ is an ergodic measure on $A$.  
First, observe that the set of all restrictions of functions in $\mathcal{C}(X)$ to $A$, denoted by $\mathcal{C}(X)|_{A}$, is dense in the set of all $\mu_{A}$-integrable functions on $A$, denoted by $\mathcal{L}^1_{\mu_{A}}(A)$.  
To show this, note that $\mathcal{C}(X)$ is dense in $\mathcal{L}^1_{\mu_{A}}(X)$.  
Let $f$ be an element of $\mathcal{L}^1_{\mu_{A}}(A)$.  
Consider the extension of $f$ to $X$, $\bar{f}$, such that $\bar{f}=f$ on $A$ and $\bar{f}=0$ elsewhere.  
Then, we have $\bar{f}\in\mathcal{L}^1_{\mu_{A}}(X)$ because the following integral exists:
\[
\int_{X}\bar{f}{d}\mu_{A}=\int_{A}f{d}\mu_{A}.
\]
Here, since $\mathcal{C}(X)$ is dense in $\mathcal{L}^1_{\mu_{A}}(X)$, there is a sequence of functions in $\mathcal{C}(X)$, $\{f_n\}$, 
converging to $\bar{f}$.  
Thus, the corresponding sequence of restrictions, $\{f_n|_{A}\}$, converges to $f$.  
Therefore, we observe that $\mathcal{C}(X)|_{A}$ is dense in $\mathcal{L}^1_{\mu_{A}}(A)$.  
Now, by the same argument as \eqref{eqn:Riesz}, for all $f\in \mathcal{C}(X)|_{A}$ we have 
\begin{align}
\int_{A}f{d}\mu_{A}
&= L_A(f) \nonumber\\
&= f^\ast(x) \qquad x\in A.
\label{eqn:ergodic_on_A}
\end{align}
Since \eqref{eqn:ergodic_on_A} holds for the dense set $\mathcal{C}(X)|_A$ in $\mathcal{L}^1_{\mu_{A}}(A)$, $\vct{S}^t|_{A}$ is ergodic:  see Proposition~2.2 in Chapter~II of \cite{Mane:1987} for discrete-time systems.  
This proposition can be naturally extended to continuous-time systems.  
Hence, we complete the proof that there indeed exists an ergodic measure $\mu_{A}$ for any element 
$A$ of the partition $\zeta\sub{e}$.  

Finally, we consider \eqref{eqn:edt} and that $A$ is invariant.  
The equality \eqref{eqn:edt} is obtained with the proof of Theorem~6.4 in Chapter~II of \cite{Mane:1987}.  
The proof is obtained for discrete-time systems and is extended to continuous-time  systems.
By construction, the fact that $A$ is invariant is obvious.  
This completes the proof of Theorem~\ref{thm:1}.

\section{Symplectic Integration of Time-Dependent Hamiltonian Systems}
\label{appsec:3}

In Section~\ref{sec:appl}, it is required to numerically simulate the Hamiltonian system \eqref{eqn:time-H} with the time-dependent Hamiltonian function ${H}(\delta,\omega,t)$.  
Symplectic integrator \cite{Yoshida_PLA150} is normally formulated in the case of time-independent Hamiltonian functions.   
However, one can exploit the integrator in the case of time-dependent Hamiltonian functions by augmenting the original Hamiltonian system.  
Consider the $N$ degree-of-freedom Hamiltonian system with the Hamiltonian function ${H}(q,p,t)$:  for $i=1,2,\ldots,N$, 
\begin{equation}
\frac{dq_i}{dt}=\frac{\DD}{\DD p_i}{H}(q,p,t), \qquad
\frac{dp_i}{dt}=-\frac{\DD}{\DD q_i}{H}(q,p,t)
\label{eqn:original}
\end{equation}
where $q=(q_1,q_2,\ldots,q_N)^\top$, $p=(p_1,p_2,\ldots,p_N)^\top$, and $t\in\bbR$.  
Now, by replacing the time variable $t$ with one new variable $q_0$ and defining the other new variable $dp_0/dt:=-\DD H/\DD t$, we have the augmented Hamiltonian function $\bar{H}(q_0,p_0,q,p)$ as follows:
\[
\bar{H}(q_0,p_0,q,p):=p_0+{H}(q,p,q_0).
\]
Thus, the augmented Hamiltonian system of the time-independent Hamiltonian function $\bar{H}$ is derived as
\begin{equation}
\frac{dq_i}{dt}=\frac{\DD}{\DD p_i}\bar{H}(q_0,p_0,q,p), \qquad
\frac{dp_i}{dt}=-\frac{\DD}{\DD q_i}\bar{H}(q_0,p_0,q,p)
\label{eqn:augmented}
\end{equation}
where $i=0,1,\ldots,N$.  
The flow induced by trajectories of the augmented system \eqref{eqn:augmented} is divergence-free and conserves the value of the Hamiltonian function $\bar{H}$.  
Thus, by using the integrator for the augmented system, numerical simulations of the original system \eqref{eqn:original} are indirectly performed.  
Note that the accuracy of numerical integration of \eqref{eqn:augmented} is checked by estimating the value of $\bar{H}$.  
This idea is applicable to the case of non-periodic time-dependent Hamiltonian functions.


\end{document}